\documentclass[10pt,amsfonts, epsfig]{amsart}
\usepackage{amsmath, amscd, amssymb}
\usepackage{graphpap, color}
\usepackage[mathscr]{eucal}
\usepackage{mathrsfs}
\usepackage{pstricks}
\usepackage{cancel}
\usepackage[mathscr]{eucal}
\usepackage{verbatim}
\usepackage[all]{xy}
\usepackage{stmaryrd}
\usepackage[numbers,sort&compress]{natbib}

\def\bB{{\mathbf B}}

\def\cA{{\cal A}}


%


\numberwithin{equation}{section}

\newcommand{\CC}{\mathbb{C}}



\newcommand{\cal}{\mathcal}

\def\cC{{\cal C}}
\def\cD{{\cal D}}

\def\cO{{\cal O}}

\def\cT{{\cal T}}


\def\fB{\mathfrak{B}}






\def\mapright#1{\,\smash{\mathop{\lra}\limits^{#1}}\,}





\def\sta{^\ast}

\def\sta{^{\ast}}

\def\sta{^*}


\def\lra{\longrightarrow}

\def\Oplus{\mathop{\oplus}}



\newcommand{\coh}{\mathrm{H}}

\def\begeq{\begin{equation}}
\def\endeq{\end{equation}}
\def\and{\quad{\rm and}\quad}

\def\sub{\subset}

\def\and{\quad\text{and}\quad}


\DeclareMathOperator{\image}{Im}

 \DeclareMathOperator{\rank}{rank}

\DeclareMathOperator{\Res}{Res}
\DeclareMathOperator{\Ho}{H}
\DeclareMathOperator{\kernel}{Ker}

\newtheorem{prop}{Proposition}[section]

\newtheorem{theo}[prop]{Theorem}
\newtheorem{lemm}[prop]{Lemma}
\newtheorem{coro}[prop]{Corollary}
\newtheorem{rema}[prop]{Remark}

\newtheorem{defi-prop}[prop]{Definition-Proposition}


\def\dbar{\overline{\partial}}

\def\sta{^\ast}
\def\image{\text{Im}\,}

\def\sD{{\mathscr D}}

\def\beq{\begin{equation}}
\def\eeq{\end{equation}}

\def\bee{\begin{equation}}
\def\eeq{\end{equation}}







\title{Serre Duality for the Cohomology of Landau-Ginzburg models}

\author[Mu-Lin Li]{Mu-Lin Li}
\address{College of Mathematics and Econometrics, Hunan University, China} \email{mulin@hnu.edu.cn}

\date{}

\begin{document}
\maketitle

\begin{abstract}
  Let $V$ and $F$ be holomorphic bundles over a complex manifold $M$, and $s$ be a holomorphic section of $V$. We study the cohomology associated to the Koszul complex induced by $s$, and prove a generalized Serre duality theorem for them.
\end{abstract}
\section{Introduction}

 The Serre duality theorem is a fundamental result in complex manifold, which establishes a
duality between the cohomology of a complex manifold and the cohomology of with compact supports, provided that the $\dbar$ operator has closed range in appropriate degrees. In this paper we extend the Serre duality to the cohomology of Landau-Ginzburg models.

Let $V$ be a holomorphic bundle over a complex manifold (usually noncompact) $M$ with  $\mathrm{rank}\, V=\dim M=n$, and $s$ be a holomorphic section of $V$ with compact zero loci $Z:=(s^{-1}(0))$.  Let $V^{\vee}$ be the dual bundle of $V$,  then $s$ induced the following Koszul complex
 \beq\label{KZ}0\to \wedge^nV^{\vee}\mapright{\iota_s}\cdots \mapright{\iota_s} \wedge^2 V^{\vee}\mapright{\iota_s} V^{\vee}\mapright{\iota_s} \CC\to0,\eeq
  where $\iota_s$ is the contraction operator induced by $s$.

Let $F$ be another holomorphic bundle over $M$, we have the following complex from (\ref{KZ})

\beq\label{KZ1}0\to \wedge^nV^{\vee}\otimes F\mapright{\iota_s\otimes 1}\cdots \mapright{\iota_s\otimes 1} \wedge^2 V^{\vee}\otimes F\mapright{\iota_s\otimes 1} V^{\vee}\otimes F\mapright{\iota_s\otimes 1}  F\to0.\eeq
Denote by $\mathbb{H}^{\bullet}(M;V,F)$ the hypercohomology associated to the above complex. We will, somewhat abusively, write $\iota_s\otimes 1$ as $\iota_s$. Because (\ref{KZ1}) is exact outside the compact set $Z$, the cohomology $\mathbb{H}^{\bullet}(M;V,F)$ is finite dimensional over $\CC$. The study of this type cohomology origins to the mathematical interpretation of the Landau-Ginzburg models, which had been widely studied in the following papers \cite{BDLT1,BDLT2,ML1,DL1,HL1,L1,L2}.

Let $\psi\in\Gamma(M,\det V\otimes \det \Omega_M)$ be a holomorphic section, where $\Omega_M$ is the holomorphic cotangent bundle of $M$. There is a canonical pairing
\beq(-,-)_{\psi}:\mathbb{H}^{\bullet}(M;V,F)\times \mathbb{H}^{\bullet}(M;V,F^{\vee})\to \CC,\nonumber\eeq see (\ref{pair3}). Then we have the following duality theorem.

\begin{theo}\label{1th}Let $V,~F$ be holomorphic bundles over the complex manifold $M$ with  $\mathrm{rank}\, V=\dim M=n$, and $s$ be a holomorphic section of $V$ with compact zero loci $Z=s^{-1}(0)$. Assume that $\psi\in\Gamma(M,\det V\otimes \det \Omega_M)$ is nowhere vanishing. Then the above pairing $(-,-)_{\psi}$ is non-degenerate. Thus for $-n\le k\le n$,
$$\mathbb{H}^{k}(M;V,F)\cong \mathbb{H}^{-k}(M;V,F^{\vee})^{\vee}.$$
\end{theo}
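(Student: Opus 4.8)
The plan is to realize the $F^\vee$-complex as the Serre dual of the $F$-complex and then to invoke $\dbar$-duality, the required closed-range property being automatic from the finite-dimensionality already established. First note that $\psi$ nowhere vanishing means it trivializes the line bundle $\det V\otimes\det\Omega_M$, i.e.\ it is an isomorphism $\theta_\psi\colon(\det V)^\vee=\wedge^nV^\vee\xrightarrow{\ \sim\ }\det\Omega_M$. Composing the perfect wedge pairing $\wedge^pV^\vee\otimes\wedge^{n-p}V^\vee\to\wedge^nV^\vee$ with $\theta_\psi$ and with the evaluation $F\otimes F^\vee\to\sO_M$ yields, for every $p$, a bundle isomorphism $\wedge^{n-p}V^\vee\otimes F^\vee\cong(\wedge^pV^\vee\otimes F)^\vee\otimes\det\Omega_M$. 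I would first record this as an isomorphism identifying the complex (\ref{KZ1}) for $F^\vee$, with its grading reversed, with the $\det\Omega_M$-dual of the complex (\ref{KZ1}) for $F$.

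The next step is to verify that this identification intertwines the two Koszul differentials. The point is that contraction is self-adjoint for the wedge pairing: for $\alpha\in\wedge^pV^\vee$ and $\beta\in\wedge^{n-p+1}V^\vee$ one has $\alpha\wedge\beta\in\wedge^{n+1}V^\vee=0$, so the antiderivation identity $\iota_s(\alpha\wedge\beta)=\iota_s\alpha\wedge\beta+(-1)^p\alpha\wedge\iota_s\beta$ gives $\langle\iota_s\alpha,\beta\rangle=(-1)^{p+1}\langle\alpha,\iota_s\beta\rangle$. Hence under the isomorphism above the differential $\iota_s$ on the $F^\vee$-complex is, up to sign, the transpose of $\iota_s$ on the $F$-complex, so the two complexes of sheaves are $\det\Omega_M$-dual to one another. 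I would then pass to fine resolutions, computing $\mathbb H^\bullet(M;V,F)$ as the cohomology of the total complex of the double complex $K^{p,q}=\cA^{0,q}(M,\wedge^pV^\vee\otimes F)$ with differential $D=\dbar\pm\iota_s$ (signed so that $D^2=0$, the two operators commuting because $\dbar s=0$), in total degree $q-p$, and likewise for $F^\vee$. On chains the pairing (\ref{pair3}) sends $\alpha\in\cA^{0,q}(\wedge^pV^\vee\otimes F)$ and $\beta\in\cA^{0,n-q}(\wedge^{n-p}V^\vee\otimes F^\vee)$ to $\int_M\theta_\psi(\alpha\wedge\beta)$, an integral of an $(n,n)$-form; the total degrees $q-p$ and $(n-q)-(n-p)=p-q$ are negatives of each other, matching the statement.

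That this pairing descends to cohomology is the formal assertion that $D$ is skew-adjoint for it: the $\iota_s$-contribution is handled by the self-duality just noted, while the $\dbar$-contribution is an integration by parts, using $\dbar\psi=0$ (as $\psi$ is holomorphic) together with Stokes' theorem. The boundary term vanishes because, the complex (\ref{KZ1}) being exact on $M\setminus Z$, every hypercohomology class is represented by a cochain supported in a fixed compact neighbourhood of $Z$; equivalently $\mathbb H^\bullet_c(M;V,F)$ and $\mathbb H^\bullet(M;V,F)$ are both canonically isomorphic to the hypercohomology with supports in the compact set $Z$, hence to each other. With convergence and integration by parts in hand, $(-,-)_\psi$ is well defined on $\mathbb H^k(M;V,F)\times\mathbb H^{-k}(M;V,F^\vee)$.

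For non-degeneracy I would invoke Serre duality in its functional-analytic form. By hypothesis each $\mathbb H^k$ is finite-dimensional, hence Hausdorff, so the \emph{total} differential $D$ has closed range. The chain-level pairing identifies the total complex for $F^\vee$ with the topological dual complex of the total complex for $F$ (smooth forms paired against compactly supported smooth forms via $\int_M\theta_\psi(\cdot\wedge\cdot)$, in the sense of Serre), intertwining $D$ with minus its transpose; for such a dual pair of complexes of Fr\'echet/DF spaces, closed range of the differential forces the induced pairing on cohomology to be perfect, and combined with $\mathbb H^\bullet_c=\mathbb H^\bullet$ this gives $\mathbb H^k(M;V,F)\cong\mathbb H^{-k}(M;V,F^\vee)^\vee$. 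I expect the genuine difficulty to lie exactly here: one cannot dualize the double complex column by column, since the individual groups $H^q(M,\wedge^pV^\vee\otimes F)$ need not be finite-dimensional nor have closed range—it is only the total $D$ whose range is controlled. The core is therefore the closed-range duality for the total complex (equivalently, the smooth-versus-current bookkeeping underlying Serre's theorem), applied to the zeroth-order perturbation $D=\dbar\pm\iota_s$ of the Dolbeault differential. A parallel route, since $D$ shares the elliptic symbol of $\dbar$, is Hodge theory for the elliptic complex $(\mathrm{Tot}^\bullet,D)$, representing classes by $D$-harmonic forms and reading off non-degeneracy from the $\ast$-operator; there the obstacle becomes the global elliptic estimates on the noncompact $M$, which one again controls using the concentration of all cohomology near the compact $Z$.
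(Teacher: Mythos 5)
Your proposal is correct and follows essentially the same route as the paper's: reduce ordinary to compactly supported hypercohomology using exactness of the Koszul complex off the compact set $Z$, realize the $\psi$-twisted $F^{\vee}$-complex as the topological dual of the compactly supported $F$-complex, and deduce non-degeneracy from the finite-dimensionality/closed-range form of functional-analytic Serre duality (the paper cites \cite{LL} for exactly this step). The two points you defer as ``bookkeeping'' are precisely where the paper does its work---the dual complex is literally the complex of currents, which is identified back with $\mathbb{H}^{\bullet}(M;V,F^{\vee})$ via the bundle isomorphism $\psi\lrcorner$ together with the Dolbeault--Grothendieck lemma for currents, and the isomorphism $\mathbb{H}^{\bullet}_c(M;V,F)\cong\mathbb{H}^{\bullet}(M;V,F)$ is proved by the explicit homotopy operators $T_\rho$, $R_\rho$ of Lemma \ref{lemmaforquasiiso} rather than by a supports-in-$Z$ argument---but your outline correctly identifies both steps and the order in which they are used.
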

This is a generalization of the non-degenerate theorem of \cite[Theorem A]{DL1} and \cite[Theorem 1.2]{LML}.

Let $V=T_M$ be the holomorphic tangent bundle of the compact complex manifold $M$, and $F$ be a holomorphic bundle over $M$. Let $s$ be the zero section of $T_M$ and $\psi=c\in \Gamma(M,\cO_M)\cong\Gamma(M,\det T_M\otimes \det \Omega_M)$ be a nonzero constant, then we recover the classical Serre duality theorem.
\begin{coro}Let $F$ be a holomorphic bundle over a compact complex manifold $M$, then
$$
\Ho^{p,q}(M,F)\cong\Ho^{n-p,n-q}(M,F^{\vee})^{\vee}.
$$
\end{coro}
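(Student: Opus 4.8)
The plan is to read the corollary off Theorem~\ref{1th} by specializing $V=T_M$, taking $s\equiv 0$ to be the zero section, $M$ compact, and $\psi=c$ a nonzero constant. First I would verify the hypotheses of Theorem~\ref{1th}. Since $\det V\otimes\det\Omega_M=\det T_M\otimes\det T_M^\vee\cong\cO_M$ canonically, a nowhere-vanishing $\psi$ is exactly a nowhere-zero locally constant function, so $\psi=c$ qualifies; and the zero locus of the zero section is $Z=M$, which is compact by hypothesis. Hence Theorem~\ref{1th} applies and the pairing $(-,-)_\psi$ is non-degenerate.

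Next I would make the hypercohomology explicit. Because $s\equiv 0$ we have $\iota_s=0$, so every differential in \eqref{KZ1} vanishes and the Dolbeault double complex computing $\mathbb{H}^{\bullet}(M;T_M,F)$ has trivial Koszul differential; its total complex is then simply the direct sum of its columns. Writing $\wedge^p T_M^\vee=\Omega_M^p$ and placing $F$ in Koszul degree $0$ so that $\wedge^p T_M^\vee\otimes F$ sits in degree $-p$ (consistent with the range $-n\le k\le n$), I obtain the bigraded splitting
$$\mathbb{H}^{k}(M;T_M,F)\;=\;\bigoplus_{q-p=k}H^{q}(M,\Omega_M^p\otimes F)\;=\;\bigoplus_{q-p=k}H^{p,q}(M,F).$$
Under this identification $H^{p,q}(M,F)$ sits in total degree $k=q-p$, while its expected dual $H^{n-p,n-q}(M,F^\vee)$ sits in total degree $(n-q)-(n-p)=-k$, matching the two sides of Theorem~\ref{1th}.

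The crux is to show that $(-,-)_\psi$ respects this bigrading, so that it is block-diagonal for the decomposition above, with $H^{p,q}(M,F)$ paired only against $H^{n-p,n-q}(M,F^\vee)$. I would establish this by a degree count on the integrand of \eqref{pair3}. Representatives lie in $A^{0,q}(M,\wedge^p T_M^\vee\otimes F)$ and $A^{0,q'}(M,\wedge^{p'}T_M^\vee\otimes F^\vee)$; the pairing contracts $F$ against $F^\vee$, multiplies the exterior factors into $\wedge^{p+p'}T_M^\vee$, and uses the $\det T_M$ part of $\psi$ to contract $\wedge^{p+p'}T_M^\vee$, which is nonzero only when $p+p'=n$. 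The surviving $\det\Omega_M=\Omega_M^n$ factor then produces an $(n,\,q+q')$-form, and integration over the compact, $2n$-real-dimensional $M$ is nonzero only for an $(n,n)$-form, forcing $q+q'=n$. Thus the pairing vanishes unless $p'=n-p$ and $q'=n-q$, which is exactly bigrading compatibility.

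Finally I would invoke the elementary fact that a non-degenerate pairing that is block-diagonal with respect to a finite direct-sum decomposition restricts to a non-degenerate pairing on each block. Combining the non-degeneracy from Theorem~\ref{1th} with the block-diagonal structure just established, each restricted pairing $H^{p,q}(M,F)\times H^{n-p,n-q}(M,F^\vee)\to\CC$ is non-degenerate, giving $\Ho^{p,q}(M,F)\cong\Ho^{n-p,n-q}(M,F^\vee)^\vee$ as claimed. The only genuine obstacle is the bigrading step: it depends on unwinding the explicit form of $(-,-)_\psi$ in \eqref{pair3} and carrying out the degree bookkeeping above, while everything else is formal once $\iota_s=0$ collapses $\mathbb{H}^{\bullet}$ to a direct sum of Dolbeault groups.
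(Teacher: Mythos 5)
Your proof is correct and follows exactly the route the paper intends: the paper states this corollary as an immediate specialization of Theorem~\ref{1th} to $V=T_M$, $s=0$, $\psi=c$ (so that $\iota_s=0$ and $\mathbb{H}^{k}(M;T_M,F)=\bigoplus_{q-p=k}\Ho^{p,q}(M,F)$), offering no further argument. The one substantive step you add --- the degree count showing $(-,-)_{\psi}$ pairs $\Ho^{p,q}(M,F)$ only against $\Ho^{n-p,n-q}(M,F^{\vee})$, so that non-degeneracy of the total pairing descends to each Dolbeault block --- is precisely the bookkeeping the paper leaves implicit, and you carry it out correctly.
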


Let $V=\Omega_{M}$ be the holomorphic cotangent bundle of a small open ball $M=\{z\in\CC^n||z|<\epsilon\}$, and $F=\cO_M$. Let $s=df$ be a holomorphic section of $V$, where $f$ is a holomorphic function on $M$. Let $\psi=dz_1\wedge\cdots\wedge dz_n\otimes e_1\wedge\cdots\wedge e_n$, where $\{z_i\}$ is the coordinate of $\CC^n$ and $\{e_i\}$ is the holomorphic frame of $\Omega_{M}$. Assume that $s=df=f_1e_1+\cdots+f_ne_n$ and $Z=s^{-1}(0)=0$, then using Theorem \ref{1th} and the proof of \cite[Theorem 1.3]{LML} we have
$$
\mathbb{H}^{0}(M;V,F)\cong\Gamma(M,\cO_{M})/(f_1,\cdots,f_n);\quad \quad \mathbb{H}^{k}(M;V,F)=0, \quad k\neq 0.
$$
%
%
%
%
For $g,h\in\Gamma(M,\cO_{M})$,
 let $\psi^{\prime}=gh\psi$. By (\ref{formula3}), we have
\begin{eqnarray*}
(g,h)_{\psi}
&=&(-1)^{\lfloor \frac{n+3}{2}\rfloor+\frac{n(n+1)}{2}}(-2\pi i)^n\Res\frac{\psi^{\prime}}{s},
\end{eqnarray*}
where $\Res\frac{\psi^{\prime}}{s}$ is the virtual residue associated to $\psi'$ and $s$, the symbol $\lfloor\frac{n+3}{2}\rfloor$ is the greatest integer less than or equal to $\frac{n+3}{2}$. The virtual residue, which had been constructed by Chang and the author in  \cite{ML1}, coincides with the Grothendieck residue up to a sign. Therefore $\Res\frac{\psi^{\prime}}{s}$ equals to the Grothendieck residue $\mathrm{res}_s(g,h)=\int_{|f_i|=\epsilon_i}\frac{gh dz_1\wedge\cdots\wedge dz_n}{f_1\cdots f_n}$ up to a sign, see formula (\ref{equa9}).
Thus we recover the following local duality theorem, see \cite[Page 659]{GH}.
\begin{coro}Let $V=\Omega_{M}$ be the holomorphic cotangent bundle of a small open ball $M=\{z\in\CC^n||z|<\epsilon\}$, and $F=\cO_M$. Let $s=df$ be a holomorphic section of $V$, where $f$ is a holomorphic function on $M$. Let $\psi=dz_1\wedge\cdots\wedge dz_n\otimes e_1\wedge\cdots\wedge e_n$, where $\{z_i\}$ is the coordinate of $\CC^n$ and $\{e_i\}$ is the holomorphic frame of $\Omega_{M}$. Assume that $s=df=f_1e_1+\cdots+f_ne_n$ and $Z=s^{-1}(0)=0$. Then
$$\mathrm{res}_s:\Gamma(M,\cO_{M})/(f_1,\cdots,f_n)\times \Gamma(M,\cO_{M})/(f_1,\cdots,f_n)\to \CC$$ is non-degenerate.
\end{coro}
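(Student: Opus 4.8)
The plan is to deduce this local duality statement directly from Theorem~\ref{1th} together with the two computations recorded immediately above, rather than reproving the classical Griffiths--Harris argument. First I would verify that the present data $(V,F,s,\psi)=(\Omega_M,\cO_M,df,\,dz_1\wedge\cdots\wedge dz_n\otimes e_1\wedge\cdots\wedge e_n)$ satisfy the hypotheses of Theorem~\ref{1th}: one has $\mathrm{rank}\,\Omega_M=\dim M=n$, the section $s=df$ has compact zero locus $Z=\{0\}$, and $\psi$ is a nowhere-vanishing section of $\det V\otimes\det\Omega_M=(\det\Omega_M)^{\otimes2}$, being the square of the standard generator $dz_1\wedge\cdots\wedge dz_n$. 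Since $F=\cO_M$ is canonically self-dual, $F^\vee\cong F$, so Theorem~\ref{1th} gives that the pairing
$$(-,-)_\psi:\mathbb{H}^\bullet(M;V,\cO_M)\times\mathbb{H}^\bullet(M;V,\cO_M)\to\CC$$
is non-degenerate.

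Next I would invoke the cohomology computation established just above for exactly this setup, namely $\mathbb{H}^0(M;V,\cO_M)\cong\Gamma(M,\cO_M)/(f_1,\cdots,f_n)$ together with $\mathbb{H}^k(M;V,\cO_M)=0$ for $k\neq0$. Since the hypercohomology is concentrated in degree $0$, the non-degenerate pairing $(-,-)_\psi$ is carried, under this isomorphism, to a non-degenerate pairing on the quotient algebra $\Gamma(M,\cO_M)/(f_1,\cdots,f_n)$. It then remains only to identify that pairing with $\mathrm{res}_s$ up to a nonzero scalar.

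For this I would use the residue formula recorded above: for $g,h\in\Gamma(M,\cO_M)$, writing $\psi^{\prime}=gh\psi$, one has
$$(g,h)_\psi=(-1)^{\lfloor\frac{n+3}{2}\rfloor+\frac{n(n+1)}{2}}(-2\pi i)^n\Res\frac{\psi^{\prime}}{s},$$
and the virtual residue $\Res\frac{\psi^{\prime}}{s}$ agrees with the Grothendieck residue $\mathrm{res}_s(g,h)$ up to a sign. Combining these yields $(g,h)_\psi=c\cdot\mathrm{res}_s(g,h)$ for a fixed constant $c\in\CC$ with $c\neq0$. Because a pairing and a nonzero scalar multiple of it have identical left and right kernels, $\mathrm{res}_s$ is non-degenerate precisely when $(-,-)_\psi$ is, and the latter was established in the first step. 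I expect the only delicate point to be the bookkeeping in the residue comparison --- propagating the identification $\mathbb{H}^0\cong\Gamma(M,\cO_M)/(f_1,\cdots,f_n)$ through the explicit pairing and confirming that the accumulated constant $c$ is indeed nonzero --- but as both the cohomology computation and the residue identity are already available, the corollary reduces to assembling them.
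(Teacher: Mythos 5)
Your proposal is correct and follows the paper's own route exactly: the paper likewise combines the non-degeneracy of $(-,-)_{\psi}$ (Corollary \ref{useful}, i.e.\ Theorem \ref{1th}) with the computation $\mathbb{H}^{0}(M;V,F)\cong\Gamma(M,\cO_{M})/(f_1,\cdots,f_n)$, the formula (\ref{formula3}) expressing $(g,h)_{\psi}$ as a nonzero constant times the virtual residue, and the identification (\ref{equa9}) of the virtual residue with $\mathrm{res}_s(g,h)$ up to sign. The only bookkeeping you flagged as delicate is precisely what the paper carries out in Section 3 and the Appendix, and your reduction "nonzero scalar multiple of a non-degenerate pairing is non-degenerate" is exactly how the corollary is obtained there.
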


\noindent{\bf Acknowledgment}:  The author thanks C. I. Lazaroiu and Wanmin Liu for inspired discussion. He also thanks the IBS Center for Geometry and Physics for hospitality during his visit on May, 2018. This work was supported by the Start-up Fund of Hunan University.

\section{Cohomology with compact support}

In this section, we study different types of hypercohomology associate to the exact sequence (\ref{KZ1}). It is similar to Section 2 in \cite{LML}. As before let $V,~F$ be holomorphic bundles over a complex manifold $M$ with  $\text{rank}\, V=\dim M=n$, and $s$ is a holomorphic section of $V$ with compact zero loci $Z=s^{-1}(0)$.  Let $V^{\vee}$ be the dual bundle of $V$.

Let $\cA^{i,j}(\wedge^l V^{\vee}\otimes F)$ be the sheaf of smooth $(i,j)$ forms
 on $M$ with value in $\wedge^lV^{\vee}\otimes F$. Let $\Omega^{(i,j)}(\wedge^l V^{\vee}\otimes F):=\Gamma(M,\cA^{i,j}(\wedge^l V^{\vee}\otimes F))$
and assign its element $\alpha$ to have degree $\sharp \alpha=i+j-l$. Let $$\Omega_c^{(i,j)}(\wedge^l V^{\vee}\otimes F):=\{\alpha|\alpha\in\Gamma(M,\cA^{i,j}(\wedge^l V^{\vee}\otimes F))\quad\text{with compact support}\}.$$ Then
  $$\bB:=\oplus_{i,j,l}\Omega^{(i,j)}(\wedge^l V^{\vee}\otimes F)$$
is a graded commutative  algebra with the  (wedge) product uniquely extending wedge products in $\Omega^{\bullet}, \wedge^{\bullet} V^{\vee}$ and mutual tensor products. Denote
$$
\cC^k_M=\Oplus_{i-j=k}C^{i,j}_M\subset \bB \ \ \ \ \ \
\mbox{with}\quad C^{i,j}_M:= \Omega^{(0,i)}(\wedge^j V^{\vee}\otimes F)=\Gamma(M,\cA^{0,i}(\wedge^j V^{\vee}\otimes F)),
$$
 and
$$
\cC^k_{c,M}=\Oplus_{i-j=k}C^{i,j}_{c,M} \ \ \ \ \ \
\mbox{with}\quad C^{i,j}_{c,M}:= \{\alpha\in C^{i,j}_{M}| \ \alpha\   \text{has compact support}\}.
$$
 Let $\cC_M:=\oplus_k\cC^k_M$ and $\cC_{c,M}:=\oplus_k\cC^k_{c,M}$. For $\alpha\in \cC_M$ we denote $\alpha_{i,j}$ to be its component in $C^{i,j}_M$. Clearly, $\cC_{M}$ is a bi-graded  $C^{\infty}(M)$-module. Under the operations
 $$\dbar: C^{i,j}_M\lra C^{i+1,j}_M \and \iota_s : C^{i,j}_M\lra C^{i,j-1}_M$$
  the space  $C^{\bullet,\bullet}_{M}$ becomes a double  complex and $C^{\bullet,\bullet}_{c,M}$ is a subcomplex. 
  We shall study the cohomology of $\cC^{\bullet}_{M}$ and $\cC^{\bullet}_{c,M}$ with respect to the following  coboundary operator
  $$\dbar_s:=\dbar+\iota_s.$$
  One checks $\dbar_s^2=0$ using Leibniz rule of $\dbar$ and $\dbar s=0$. Denote by
  $$
  \mathbb{H}^{k}(M;V,F)=\Ho^k(\cC^{\bullet}_M),
  $$
and
 $$
  \mathbb{H}^{k}_c(M;V,F)=\Ho^k(\cC^{\bullet}_{c,M}).
$$

Fix a Hermitian metric $h_V$ on $V$.   For a nonzero $s$  on $U:=M\setminus Z$, we can form the following smooth section
 $$\bar{s}:=\frac{(*,s)_{h_V}}{(s,s)_{h_V}}\in\Gamma(U,\cA^{0,0}( V^{\vee}\otimes F)).$$ It associates a  map    $$\bar{s}\wedge:\Gamma(U,\cA^{0,i}(\wedge^j V^{\vee}\otimes F))\rightarrow\Gamma(U,\cA^{0,i}(\wedge^{j+1} V^{\vee}\otimes F)).$$
 To distinguish it in later calculation, we denote
$
 \cT_s:= \bar{s}\wedge:   C^{\bullet,\bullet}_{U}\longrightarrow C^{\bullet, \bullet+1}_{U}
$
, where $ C^{\bullet,\bullet}_{U}:=\Gamma(U,\cA^{0,\bullet}( \wedge^{\bullet}V^{\vee}\otimes F))$.

The injection $j:U\to M$ induces the restriction  $j^*:C^{\bullet, \bullet}_M\to C^{\bullet, \bullet}_U$.
Let $\rho$ be a smooth cut-off function on $M$ such that $\rho|_{U_1}\equiv 1$ and $\rho|_{M\setminus U_2}\equiv 0$
 for some  relatively compact open neighborhoods $U_1\subset \overline{U}_1 \subset U_2$ of $Z$ in $M$.

 We define the degree of an operator to be its change on the total degree of elements in $\cC_M(\cC_U)$.
Then   $\dbar$ and $\cT_s$ are of degree $1$ and $-1$ respectively, and
 $[\dbar, \cT_s]=\dbar \cT_s+ \cT_s\dbar$ is of degree $0$.  Consider two  operators  introduced   in \cite[(3.1), (3.2)]{ML1} or \cite[page 11]{LLS}
 \beq\label{defi-t1}
 T_\rho: \cC_M\to \cC_{c,M}  \qquad   \qquad T_\rho(\alpha):=\rho \alpha+(\dbar\rho)\cT_s {1\over 1+[\dbar, \cT_s]}(j^*\alpha) \eeq
and
\beq\label{defi-t2}
\ \   R_\rho: \cC_M\to \cC_M \qquad  \
 \qquad R_\rho(\alpha):= (1-\rho) \cT_s {1\over 1+[\dbar, \cT_s]}(j^*\alpha).
\eeq
Here as an operator
$$
{1\over 1+[\dbar, \cT_s]}:=\sum\limits_{k=0}^\infty(-1)^k[\dbar, \cT_s]^k
$$
is well-defined since $[\dbar, \cT_s]^k(\alpha)=0$ whenever $k>n$. Clearly $T_\rho$ is of  degree zero and $R_\rho$ is of  degree by $-1$. Also $R_\rho(\cC_{c,M})\sub \cC_{c,M}$ by definition.

\begin{lemm}\label{lemmaforquasiiso}
  {\upshape $[\dbar_s,  R_\rho]=1-T_\rho$  } as operators on $\cC_M$.
\end{lemm}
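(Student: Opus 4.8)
The plan is to prove the identity as one of super-operators on the bigraded module $\cC_M$, reducing everything to the Koszul relations satisfied by $\iota_s$ and $\cT_s$. First I would record the basic (anti)commutation identities, all with respect to the supercommutator convention already in force (so every bracket of two odd operators is an anticommutator). Since $s$ is holomorphic, $\dbar s=0$ gives $\dbar\iota_s+\iota_s\dbar=0$, which together with $\dbar^2=\iota_s^2=0$ is exactly why $\dbar_s^2=0$. On $U$ the section $\bar s$ is dual to $s$, so the contraction–wedge identity $\iota_s\cT_s+\cT_s\iota_s=1$ holds, i.e. $[\iota_s,\cT_s]=1$, and also $\cT_s^2=0$. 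Writing $A:=[\dbar,\cT_s]$, these combine to $[\dbar_s,\cT_s]=[\dbar,\cT_s]+[\iota_s,\cT_s]=A+1$.

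Next I would show that the operator $K:=\tfrac{1}{1+A}=\sum_k(-1)^kA^k$ commutes with $\dbar_s$. Using $\dbar^2=0$ one gets $\dbar A=A\dbar=\dbar\cT_s\dbar$, so $\dbar$ commutes with $A$; using $[\iota_s,\cT_s]=1$ together with $\iota_s\dbar=-\dbar\iota_s$ one checks in the same way that $\iota_s A=A\iota_s$. Hence $\dbar_s$ commutes with $A$, and therefore with the whole power series $K$. Since restriction to $U$ commutes with the coboundary, one also has $j^*\dbar_s=\dbar_s j^*$. These facts let me move $\dbar_s$ freely past $K$ and $j^*$ inside $R_\rho$.

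Then comes the main computation. Abbreviate $P:=\cT_s\,K\,j^*$, so that $R_\rho=(1-\rho)P$. Because $1-\rho$ is a scalar function (even, of degree $0$) vanishing on a neighborhood of $Z$, the graded Leibniz rule gives $\dbar_s\big((1-\rho)P\alpha\big)=-(\dbar\rho)\wedge P\alpha+(1-\rho)\,\dbar_s\cT_s\,K j^*\alpha$, the sign on $\dbar\rho$ arising with no stray factor precisely because $1-\rho$ is even. On the other side, commuting $\dbar_s$ past $j^*$ and $K$ yields $R_\rho\dbar_s\alpha=(1-\rho)\,\cT_s\dbar_s\,K j^*\alpha$. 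Adding, the two $(1-\rho)$-terms combine into $(1-\rho)[\dbar_s,\cT_s]K j^*\alpha=(1-\rho)(1+A)K j^*\alpha=(1-\rho)j^*\alpha=(1-\rho)\alpha$, using $(1+A)K=1$ and the fact that $1-\rho\equiv 0$ near $Z$. Thus $[\dbar_s,R_\rho]\alpha=(1-\rho)\alpha-(\dbar\rho)\wedge P\alpha$, which is exactly $\alpha-\rho\alpha-(\dbar\rho)\cT_s K j^*\alpha=(1-T_\rho)\alpha$.

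The only genuinely delicate point is the parity and sign bookkeeping: one must fix the convention that $[\,\cdot\,,\cdot\,]$ is the supercommutator, so that $[\dbar_s,R_\rho]$, as well as all brackets among $\dbar$, $\iota_s$, $\cT_s$, are anticommutators (these operators being odd, of degrees $\pm1$), and one must track that the graded Leibniz rule produces $-\dbar\rho$ cleanly. A secondary matter worth stating explicitly is that each product by $1-\rho$ or by $\dbar\rho$, although formed using the data $P\alpha$ defined only on $U$, extends smoothly across $Z$ by zero, so that all of the above are genuine identities of operators $\cC_M\to\cC_M$ (with $T_\rho$ landing in $\cC_{c,M}$). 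With these conventions pinned down, the computation above is complete.
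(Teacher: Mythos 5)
Your proof is correct and follows essentially the same route as the paper's: both rest on the Koszul relation $[\iota_s,\cT_s]=1$, the fact that $[\dbar,\cT_s]$ (super)commutes with $\dbar$ and $\iota_s$ so that $\dbar_s$ passes through the power series $\frac{1}{1+[\dbar,\cT_s]}$ and through $j^*$, and the graded Leibniz expansion of $[\dbar_s,(1-\rho)\cT_s\frac{1}{1+[\dbar,\cT_s]}j^*]$, with $[\dbar_s,1-\rho]=-\dbar\rho$ and $(1-\rho)[\dbar_s,\cT_s]\frac{1}{1+[\dbar,\cT_s]}j^*=(1-\rho)$. Your write-up merely makes explicit the commutation checks and the extension-by-zero point that the paper leaves implicit.
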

\begin{proof}
It is direct to check that
\footnote{
As a notation convention, we always denote $[,]$ for the graded commutator, that is for operators $A, B$ of degree $|A|$ and $|B|$, the bracket is given by
$$
[A,B]=AB-(-1)^{|A||B|}BA.
$$  \black}
  \beq\label{commutator1} [\iota_s,  \cT_s]=1\quad \text{on}\  \cC_U.
  \eeq
Moreover,
       $$[P, [\dbar, \cT_s]]=0$$
 for   $P$ being   $\iota_s, \dbar$ or $\cT_s$. Therefore, we have
\begin{eqnarray*}
[\dbar_s,  R_\rho]&=&[\dbar_s, 1-\rho]\cT_s {1\over 1+[\dbar, \cT_s ]}j^*+(1-\rho)[\dbar_s,  \cT_s ] {1\over 1+[\dbar, \cT_s ]}j^* \\
&=&-(\dbar\rho)\cT_s \frac{1}{ 1+[\dbar, \cT_s ]}j^*+(1-\rho)j^*\\
&=&-(\dbar\rho)\cT_s \frac{1}{ 1+[\dbar, \cT_s ]}j^*+(1-\rho)=1-T_\rho.
\end{eqnarray*}
\end{proof}
\begin{prop}\label{quasi-isom}
The embedding $(\cC_{c,M}, \dbar_s)\rightarrow (\cC_M,\dbar_s)$ is a quasi-isomorphism. Thus for $-n\le k\le n$,
$$\mathbb{H}^{k}_c(M;V,F)\cong \mathbb{H}^{k}(M;V,F).$$
\end{prop}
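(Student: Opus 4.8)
The plan is to promote the single homotopy formula of Lemma~\ref{lemmaforquasiiso} into a two-sided chain-homotopy equivalence, with the inclusion $\iota:\cC_{c,M}\to\cC_M$ and the operator $T_\rho$ as mutually inverse maps, and with $R_\rho$ serving as the homotopy on \emph{both} complexes. Once this is set up, the Proposition follows from the standard fact that a two-sided chain-homotopy inverse induces isomorphisms on cohomology.

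First I would record that $T_\rho$ is a chain map. Since $\dbar_s^2=0$ and $[\dbar_s,R_\rho]=1-T_\rho$ by Lemma~\ref{lemmaforquasiiso}, a purely formal computation gives
\[
[\dbar_s,T_\rho]=[\dbar_s,\,1-[\dbar_s,R_\rho]]=-[\dbar_s,[\dbar_s,R_\rho]]=0,
\]
the last equality because $[\dbar_s,[\dbar_s,R_\rho]]=\dbar_s^2R_\rho+\dbar_s R_\rho\dbar_s-\dbar_s R_\rho\dbar_s-R_\rho\dbar_s^2=0$. As $T_\rho$ has degree zero, this says $\dbar_s T_\rho=T_\rho\dbar_s$, so $T_\rho$ descends to cohomology; the inclusion $\iota$ is trivially a chain map because $\cC^\bullet_{c,M}$ is a subcomplex of $\cC^\bullet_M$.

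Next I would read Lemma~\ref{lemmaforquasiiso} on the two complexes separately. On $\cC_M$ the identity $\iota T_\rho-1=-[\dbar_s,R_\rho]$ exhibits $R_\rho$ as a chain homotopy between $\iota\circ T_\rho$ and $\mathrm{id}_{\cC_M}$. The crucial observation is that, because $R_\rho(\cC_{c,M})\sub\cC_{c,M}$ while $T_\rho$ already lands in $\cC_{c,M}$, the very same operator equation restricts verbatim to the subcomplex: for $\alpha\in\cC_{c,M}$ every term of $[\dbar_s,R_\rho]\alpha=\alpha-T_\rho\iota(\alpha)$ lies in $\cC_{c,M}$. Hence $R_\rho|_{\cC_{c,M}}$ is a chain homotopy between $T_\rho\circ\iota$ and $\mathrm{id}_{\cC_{c,M}}$. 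Concretely, this yields surjectivity of $\iota_*$ (for $\dbar_s$-closed $\alpha\in\cC_M$ one has $\alpha=T_\rho\alpha+\dbar_s R_\rho\alpha$ with $T_\rho\alpha\in\cC_{c,M}$ closed) and injectivity (if a closed $\beta\in\cC_{c,M}$ equals $\dbar_s\gamma$ in $\cC_M$, then in $\cC_{c,M}$ we get $\beta=T_\rho\beta+\dbar_s R_\rho\beta=\dbar_s(T_\rho\gamma+R_\rho\beta)$, both terms compactly supported).

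With both compositions homotopic to the respective identities, $\iota$ and $T_\rho$ are mutually inverse homotopy equivalences, so $\iota_*:\mathbb{H}^{k}_c(M;V,F)\to\mathbb{H}^{k}(M;V,F)$ is an isomorphism in every degree, and the stated range $-n\le k\le n$ simply records the degrees in which $\cC^\bullet$ can be nonzero. The only step that is not pure formalism is the support bookkeeping in the third paragraph: one must check that $\dbar_s$, $R_\rho$, and $T_\rho$ all preserve compact support so that restricting the operator identity to $\cC_{c,M}$ is legitimate. This is exactly guaranteed by the subcomplex structure of $\cC^\bullet_{c,M}$, by $R_\rho(\cC_{c,M})\sub\cC_{c,M}$, and by $T_\rho:\cC_M\to\cC_{c,M}$; verifying these inclusions is where I expect the main (and only real) care to be required.
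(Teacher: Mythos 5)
Your proof is correct. The chain-map property of $T_\rho$ via $[\dbar_s,[\dbar_s,R_\rho]]=0$, the reading of Lemma~\ref{lemmaforquasiiso} as a homotopy $\iota\circ T_\rho\simeq \mathrm{id}_{\cC_M}$, and the restriction of the same operator identity to the subcomplex (legitimate precisely because $T_\rho(\cC_M)\sub\cC_{c,M}$, $R_\rho(\cC_{c,M})\sub\cC_{c,M}$, and $\cC_{c,M}$ is a $\dbar_s$-subcomplex) are all sound, and together they show the inclusion is a two-sided chain homotopy equivalence, hence a quasi-isomorphism. The paper reaches the same conclusion by a different packaging of identical ingredients: instead of restricting $[\dbar_s,R_\rho]=1-T_\rho$ to $\cC_{c,M}$, it projects it to the quotient complex $\cC_M/\cC_{c,M}$, where $T_\rho$ vanishes because it takes values in $\cC_{c,M}$; the identity then reads $[\dbar_s,R_\rho]=1$, so $R_\rho$ is a contracting homotopy, $\Ho\sta(\cC_M/\cC_{c,M},\dbar_s)\equiv 0$, and the long exact cohomology sequence of $0\to\cC_{c,M}\to\cC_M\to\cC_M/\cC_{c,M}\to 0$ finishes the argument in one line. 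The trade-off is the usual one: the quotient-plus-long-exact-sequence route is shorter, while your route avoids the long exact sequence and yields the strictly stronger statement that $\iota$ is a chain homotopy equivalence with explicit inverse $T_\rho$; in particular your concrete formula $\alpha=T_\rho\alpha+\dbar_s R_\rho\alpha$ for $\dbar_s$-closed $\alpha$ is exactly the identity the paper reuses in Section 3 (to replace $[1]$ by $[T_\rho 1]$ in the residue computation), so nothing in your extra explicitness is wasted.
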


\begin{proof}
 By Lemma  \ref{lemmaforquasiiso} $
  \Ho\sta(\cC_{M}/\cC_{c,M}, \dbar_s)\equiv0,$ and thus the proposition follows.
\end{proof}

\section{Non-dengeneracy}

First we recall the definition of operators on wedge products of vector bundles over $M$, see \cite[Appendix]{ML1}. Let $\Omega^{(i,j)}(\wedge^k V \otimes\wedge^l V^{\vee}):=\Gamma(M,\cA^{i,j}(\wedge^k V \otimes\wedge^l V^{\vee}))$ be the smooth differential forms valued in $\wedge^k V \otimes\wedge^l V^{\vee}$. Let
$$\fB:=\oplus_{i,j,k,l}\Omega^{(i,j)}(\wedge^k V \otimes\wedge^l V^{\vee})$$ be a graded commutative algebra  extending the wedge products of $\Omega^{\bullet}, \wedge^\bullet V$ and $\wedge^\bullet V^{\vee}$. 
 The degree of $\alpha\in \Omega^{(i,j)}(\wedge^k V \otimes\wedge^l V^{\vee})$ is   $\sharp\alpha:=i+j+k-l$.  We briefly denote $A^0(\wedge^k V \otimes\wedge^l V^{\vee})=\Omega^{(0,0)}(\wedge^k V \otimes \wedge^l V^{\vee})$.\black

Set  $\kappa:\fB\to \Omega^\bullet$
which sends $\omega (e\otimes e')$(for $\omega\in\Omega^{(i,j)}, e\in \wedge^k V, e'\in \wedge^l V^{\vee})$ to $\omega \langle e,e'\rangle$, where $\langle \cdot, \cdot \rangle$
is the dual pairing between $\wedge^k V$ and $\wedge^k V^{\vee}$, and $\langle e,e'\rangle =0$ when $k\neq l$. We further extend the pairing by setting  $\langle\alpha,\beta\rangle :=\kappa(\alpha \beta)$  for $\alpha,\beta\in \fB$.
It is direct to verify \beq\label{kob}\dbar\langle\alpha,\beta\rangle =\langle\dbar\alpha,\beta\rangle +(-1)^{\sharp \alpha}\langle\alpha,\dbar\beta\rangle .\eeq

 We now define different types of contraction maps.
Given   $u\in \Omega^{(i,j)}(\wedge^k V)$ and
 $k\geq l$,  we   define
 \beq\label{operator1} u \lrcorner: \Omega^{(p,q)}(\wedge^{l}V^{\vee})\lra \Omega^{(p+i,q+j)}(\wedge^{k-l}V) \eeq
where for $\theta\in \Omega^{(p,q)}(\wedge^{l}V^{\vee})$, the $u\lrcorner \theta$ is determined by
$$\langle u\lrcorner \theta,\nu\sta\rangle =(-1)^{(i+j)l+(p+q)\sharp u+\frac{l(l-1)}{2}}\langle u,\theta\wedge \nu^*\rangle ,\qquad \forall \nu^*\in A^0(\wedge^{k-l}V^{\vee}).$$

 Given $\alpha\in A^0(V)$, we define
\beq\label{operator2}
\iota_{\alpha} : \Omega^{(i,j)}(\wedge^{k}V^{\vee}) \lra \Omega^{(i,j)}(\wedge^{k-1}V^{\vee})\eeq
where for $w\in  \Omega^{(i,j)}(\wedge^{k}V^{\vee})$, the $\iota_{\alpha}(w)$ is determined by
 \beq\label{us1}\langle \nu,\iota_{\alpha}(w)\rangle =\langle \alpha\wedge \nu,w\rangle ,\qquad \forall \nu \in A^0(\wedge^{k-1}V).\eeq

 For above $\alpha$, $\theta$ and $w$ one has
 $\iota_\alpha(w\wedge \theta)=\iota_\alpha(w)\wedge\theta +(-1)^{\sharp w}w\wedge \iota_\alpha(\theta).$

  Given $\gamma\in A^0(V^{\vee})$, we define
\beq\label{operator3}
\iota_{\gamma} : \Omega^{(i,j)}(\wedge^{k}V) \lra \Omega^{(i,j)}(\wedge^{k-1}V)\eeq
where for $\nu\in  \Omega^{(i,j)}(\wedge^{k}V)$, the $\iota_{\gamma}(\nu)$ is determined by
 \beq\label{us2}\langle \iota_{\gamma}(\nu),w\rangle =\langle  \nu,\gamma\wedge w\rangle ,\qquad \forall w \in A^0(\wedge^{k-1}V^{\vee}).\eeq

 We have the following identities.

\begin{lemm}[\cite{ML1}]\label{sign}
Given $u\in \Omega^{(i,j)}(\wedge^n V)$,  and $\theta,~\alpha,~\gamma$ as above, one has
$$
\alpha \wedge(u\lrcorner \theta)=u\lrcorner(\iota_\alpha (\theta) ),\quad\quad \iota_{\gamma}(u\lrcorner \theta)=u\lrcorner(\gamma\wedge\theta).
$$

\end{lemm}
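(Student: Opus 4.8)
The plan is to verify each identity by exploiting the fact that $u\lrcorner$, $\iota_\alpha$ and $\iota_\gamma$ are all \emph{defined} by their pairings against form-free test elements, so that an operator identity between two $\wedge^\bullet V$-valued forms can be checked by pairing both sides against an arbitrary $\nu^*\in A^0(\wedge^\bullet V^\vee)$ and reducing to a numerical identity about $\langle u,-\rangle$. Throughout I use that $u$ sits in the \emph{top} exterior power $\wedge^n V$, so that every element of $\wedge^{n+1}V^\vee$ vanishes and $\langle u,\xi\rangle=0$ unless $\xi\in\wedge^n V^\vee$; this is what makes $u\lrcorner$ behave like a duality interchanging $\wedge$ and $\iota$.

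For the first identity, fix $\nu^*\in A^0(\wedge^{n-l+1}V^\vee)$; both sides of $\alpha\wedge(u\lrcorner\theta)=u\lrcorner(\iota_\alpha\theta)$ lie in $\Omega^{(p+i,q+j)}(\wedge^{n-l+1}V)$, so it suffices to match $\langle-,\nu^*\rangle$. On the left I transpose the wedge $\alpha\wedge$ into the contraction $\iota_\alpha$ on the dual side, using the adjunction (\ref{us1}) together with the graded symmetry of $\langle\cdot,\cdot\rangle=\kappa(\cdot\,\cdot)$, obtaining a multiple of $\langle u\lrcorner\theta,\iota_\alpha\nu^*\rangle$; then the defining relation (\ref{operator1}) of $u\lrcorner$ rewrites this as a multiple of $\langle u,\theta\wedge\iota_\alpha\nu^*\rangle$. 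On the right, (\ref{operator1}) applied with $\theta$ replaced by $\iota_\alpha\theta\in\Omega^{(p,q)}(\wedge^{l-1}V^\vee)$ gives a multiple of $\langle u,(\iota_\alpha\theta)\wedge\nu^*\rangle$. The two are linked by a single clean observation: since $\theta\wedge\nu^*\in\wedge^{n+1}V^\vee=0$, the Leibniz rule for $\iota_\alpha$ forces $(\iota_\alpha\theta)\wedge\nu^*=-(-1)^{\sharp\theta}\,\theta\wedge(\iota_\alpha\nu^*)$ inside $\wedge^n V^\vee$, so the two expressions for $\langle u,-\rangle$ agree up to an explicit sign. It then remains only to check that this sign, together with the graded-symmetry sign and the two normalization constants coming from (\ref{operator1}), multiply to $+1$.

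For the second identity the route is more direct and uses no transposition of the pairing. Fix $w\in A^0(\wedge^{n-l-1}V^\vee)$. The defining relation (\ref{us2}) of $\iota_\gamma$ gives $\langle\iota_\gamma(u\lrcorner\theta),w\rangle=\langle u\lrcorner\theta,\gamma\wedge w\rangle$, and since $\gamma\wedge w\in A^0(\wedge^{n-l}V^\vee)$ is form-free, (\ref{operator1}) turns this into a normalization constant times $\langle u,\theta\wedge\gamma\wedge w\rangle$. On the other side, (\ref{operator1}) applied to $\gamma\wedge\theta\in\Omega^{(p,q)}(\wedge^{l+1}V^\vee)$ gives a different normalization constant times $\langle u,\gamma\wedge\theta\wedge w\rangle$. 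The only discrepancy between the two is the transposition $\theta\wedge\gamma=(-1)^{\sharp\theta\,\sharp\gamma}\gamma\wedge\theta$ inside $\langle u,-\rangle$, and one checks that this Koszul sign is compensated by the difference between the factors $(-1)^{(i+j)l+\frac{l(l-1)}2}$ and $(-1)^{(i+j)(l+1)+\frac{l(l+1)}2}$ of the two constants in (\ref{operator1}).

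I expect the sign bookkeeping to be the only genuine obstacle, and it is the sole reason the normalization $(-1)^{(i+j)l+(p+q)\sharp u+\frac{l(l-1)}2}$ is built into (\ref{operator1}): it is chosen precisely so that, after the Leibniz step (first identity) or the transposition step (second identity), no residual sign survives. A robust way to carry out the check — and to guard against convention-dependent signs hidden in $\kappa$ and in the dual pairing of $\wedge^\bullet V$ with $\wedge^\bullet V^\vee$ — is to note that both identities are local and $C^\infty(M)$-linear, reduce by a local holomorphic frame $\{e_i\}$ of $V$ to the fiberwise case $u=\omega\,(e_1\wedge\cdots\wedge e_n)$, where $u\lrcorner$ becomes the complementation isomorphism sending a basis covector $e^{I}$ to $\pm\,e_{I^c}$. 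On such basis elements $\iota_\alpha$ and $\gamma\wedge$ delete or insert a single index, $\alpha\wedge$ and $\iota_\gamma$ do the same on the complementary set, and both identities collapse to the standard sign relation comparing the reorderings of $I$ and its complement; one verifies this once and then reinstates the form factors, whose Koszul signs are exactly those already recorded in (\ref{operator1}).
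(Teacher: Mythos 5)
The paper itself offers no proof of Lemma \ref{sign}: it is imported wholesale from \cite{ML1}, so your argument has to stand entirely on its own. Your skeleton is certainly the natural one (pair both sides against form-free test elements, use the defining relations (\ref{operator1}), (\ref{us1}), (\ref{us2}) to move everything into $\langle u,-\rangle$, reduce to a sign check, with a local-frame computation as a fallback), and it is surely how any proof of this statement proceeds. The problem is that you never carry out the sign check, and the sign check \emph{is} the lemma: both identities are trivially true ``up to sign,'' and the only content is that the normalization $(-1)^{(i+j)l+(p+q)\sharp u+\frac{l(l-1)}{2}}$ in (\ref{operator1}) makes them hold on the nose. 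Both of your verifications end with ``it then remains only to check\dots'' and ``one checks that this Koszul sign is compensated\dots,'' i.e.\ the proof stops exactly where the lemma begins.

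Moreover, the two deferred steps are not routine, and one of them is false as you state it. (a) In the first identity, your transposition applies (\ref{us1}) to $\langle\alpha\wedge(u\lrcorner\theta),\nu^*\rangle$, but (\ref{us1}) is stated only for \emph{form-free} entries $\nu\in A^0(\wedge^{k-1}V)$ on the $\wedge^\bullet V$ side, whereas $u\lrcorner\theta$ has form degree $(p+i,q+j)$; the extension of (\ref{us1}) to form-valued entries carries an extra sign of parity $(p+q)+(i+j)$, and that sign is precisely what is needed to balance the identity, so it cannot be absorbed into ``graded symmetry.'' (b) In the second identity, the claimed compensation is quantitatively wrong: the difference of the two normalization factors is $(-1)^{(i+j)+l}$, while the Koszul sign for $\theta\wedge\gamma$ versus $\gamma\wedge\theta$ is $(-1)^{(p+q)+l}$ (or $(-1)^{l}$, depending on how forms are declared to commute with $V^\vee$-factors), leaving a residual $(-1)^{(i+j)+(p+q)}$ (resp.\ $(-1)^{i+j}$). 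This is not an artifact of conventions: take $n=1$, $u=\eta\otimes e$ with $\eta$ a $(0,1)$-form, $\theta=1$, $\gamma=e^*$; then (\ref{operator1}) and (\ref{us2}) give $\iota_\gamma(u\lrcorner\theta)=\eta$ while $u\lrcorner(\gamma\wedge\theta)=(-1)^{(i+j)\cdot 1}\eta=-\eta$, a computation that moves only functions and so holds under any convention. Thus, for form-valued $u$, the identity cannot follow from the definitions exactly as transcribed in this paper: the proof requires pinning down (or correcting) the sign conventions of \cite{ML1} for products and pairings of mixed elements of $\fB$, and then performing, not asserting, the cancellation. Your own proposed frame-by-frame verification would have surfaced exactly this discrepancy had you executed it; as written, the proposal has a genuine gap at the only nontrivial point.
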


\begin{lemm}[\cite{ML1}]\label{sign1}
For $u\in\Omega^{(i,j)} (\wedge^k V)$, $\theta\in \Omega^{(p,q)} (\wedge^l V^{\vee})$, $k\ge l$ and a smooth form $\alpha\in\Omega^{(a,b)}(M)$, we have
 $$\alpha \wedge (u\lrcorner  \theta)= u\lrcorner (\alpha \theta)\and   \dbar (u\lrcorner \theta)=(-1)^{\sharp \theta}(\dbar u)\lrcorner \theta + u \lrcorner (\dbar \theta).$$
\end{lemm}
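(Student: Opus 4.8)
The plan is to prove both identities by pairing against an arbitrary smooth section $\nu^\ast\in A^0(\wedge^{k-l}V^\vee)$ and appealing to the nondegeneracy of the dual pairing $\langle\cdot,\cdot\rangle$ on $\wedge^{k-l}V\times\wedge^{k-l}V^\vee$: each side of each identity is a $\wedge^{k-l}V$-valued form, and equality of such forms may be checked pointwise, hence against a (holomorphic) local frame $\nu^\ast$ of $\wedge^{k-l}V^\vee$. By the defining relation (\ref{operator1}), every contraction $u\lrcorner(\cdots)$ becomes $\langle u,(\cdots)\wedge\nu^\ast\rangle$ weighted by the sign $\varepsilon(u,\theta)=(-1)^{(i+j)l+(p+q)\sharp u+\frac{l(l-1)}{2}}$, so the entire argument reduces to careful sign bookkeeping.

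For the first identity, which involves no derivatives, I would expand $\langle u\lrcorner(\alpha\theta),\nu^\ast\rangle$ using (\ref{operator1}). Since $\alpha\theta$ has bidegree $(p+a,q+b)$, the weight acquires an extra $(-1)^{(a+b)\sharp u}$. Rewriting the result as $\langle u,\alpha\theta\wedge\nu^\ast\rangle$ and moving the scalar form $\alpha$ to the front — which costs a second $(-1)^{(a+b)\sharp u}$ by the graded-commutativity of $\fB$, and yields $\alpha\wedge\langle u,\theta\wedge\nu^\ast\rangle$ because $\kappa$ factors out scalar forms — the two copies of $(-1)^{(a+b)\sharp u}$ cancel. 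What is left is exactly $\alpha\wedge\langle u\lrcorner\theta,\nu^\ast\rangle=\langle\alpha\wedge(u\lrcorner\theta),\nu^\ast\rangle$, giving the claim.

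For the Leibniz rule I would first arrange $\dbar\nu^\ast=0$ by taking $\nu^\ast$ in a holomorphic local frame of $\wedge^{k-l}V^\vee$. Then (\ref{kob}) gives $\langle\dbar(u\lrcorner\theta),\nu^\ast\rangle=\dbar\langle u\lrcorner\theta,\nu^\ast\rangle=\varepsilon(u,\theta)\,\dbar\langle u,\theta\wedge\nu^\ast\rangle$, and a second application of (\ref{kob}), together with $\dbar(\theta\wedge\nu^\ast)=\dbar\theta\wedge\nu^\ast$, splits this into $\langle\dbar u,\theta\wedge\nu^\ast\rangle$ and $(-1)^{\sharp u}\langle u,\dbar\theta\wedge\nu^\ast\rangle$. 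Re-contracting each factor via (\ref{operator1}) turns these back into $\langle(\dbar u)\lrcorner\theta,\nu^\ast\rangle$ and $\langle u\lrcorner(\dbar\theta),\nu^\ast\rangle$, so the identity follows provided the accumulated signs are correct.

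That sign comparison is the only real obstacle. Passing from $u$ to $\dbar u$ replaces $(i,j)$ by $(i,j+1)$ and $\sharp u$ by $\sharp u+1$, so the weighting exponent grows by $l+(p+q)$; hence the weight $\varepsilon(u,\theta)$ from the first step and $\varepsilon(\dbar u,\theta)$ from re-contracting combine to $(-1)^{l+(p+q)}=(-1)^{\sharp\theta}$, the desired coefficient on the $(\dbar u)\lrcorner\theta$ term. Passing from $\theta$ to $\dbar\theta$ replaces $(p,q)$ by $(p,q+1)$, raising the exponent by $\sharp u$; combined with the factor $(-1)^{\sharp u}$ supplied by (\ref{kob}), this produces an even power of $-1$, i.e.\ the coefficient $+1$ on the $u\lrcorner(\dbar\theta)$ term. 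In both cases the two copies of $\frac{l(l-1)}{2}$ enter with equal parity and cancel, so no subtlety from the half-integer exponent survives; nondegeneracy of $\langle\cdot,\cdot\rangle$ then promotes these pairing identities to the stated equalities of forms.
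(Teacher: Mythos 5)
Your proposal is correct, but there is in fact nothing in this paper to compare it against: Lemma \ref{sign1} is stated as a quotation from \cite{ML1}, and no proof appears in the present paper, so your argument is not a variant of the paper's proof but a self-contained replacement for a missing one. Your route is the natural one given the paper's definitions: both sides of each identity are $\wedge^{k-l}V$-valued forms, so it suffices to pair against a local holomorphic frame $\nu^*$ of $\wedge^{k-l}V^{\vee}$ (nondegeneracy of $\langle\cdot,\cdot\rangle$ plus locality of the claim), after which everything reduces to the defining relation (\ref{operator1}), the compatibility (\ref{kob}), and graded commutativity of $\fB$. I checked your sign bookkeeping and it is exact: writing $\varepsilon(u,\theta)=(-1)^{(i+j)l+(p+q)\sharp u+\frac{l(l-1)}{2}}$, the first identity holds because the extra factor $(-1)^{(a+b)\sharp u}$ in $\varepsilon(u,\alpha\theta)$ coming from the bidegree shift of $\alpha\theta$ cancels against the $(-1)^{(a+b)\sharp u}$ from commuting the scalar form $\alpha$ past $u$ inside $\kappa$; for the Leibniz rule, $\varepsilon(u,\theta)\,\varepsilon(\dbar u,\theta)=(-1)^{l+p+q}=(-1)^{\sharp\theta}$ (since $\sharp\theta=p+q-l$) produces exactly the coefficient of $(\dbar u)\lrcorner\theta$, while $\varepsilon(u,\theta)\,\varepsilon(u,\dbar\theta)=(-1)^{\sharp u}$ cancels the $(-1)^{\sharp u}$ supplied by (\ref{kob}), giving coefficient $+1$ on $u\lrcorner(\dbar\theta)$. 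Two points are worth making explicit if you write this up: (i) checking against holomorphic frame sections suffices because the defining relation (\ref{operator1}) is $C^{\infty}$-linear in $\nu^*$, hence localizes, and a frame spans each fibre of $\wedge^{k-l}V^{\vee}$; and (ii) $\dbar$ acts as a graded derivation on $\fB$, which is what you use when writing $\dbar(\theta\wedge\nu^*)=\dbar\theta\wedge\nu^*$ for holomorphic $\nu^*$.
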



Denote by $\sD^{p,q}(\wedge^l V\otimes F^{\vee})$ the space of $\wedge^l V\otimes F^{\vee}$-valued $(p,q)$-current, which is the dual of the space $\Omega^{n-p,n-q}_c(\wedge^l V^{\vee}\otimes F)$. There is a naturally pairing
 \beq\label{pair9}
 (-,-)_N:\sD^{n,n-i}(\wedge^j V\otimes F^{\vee})\times \Omega^{0,i}_c(\wedge^j V^{\vee}\otimes F)\to \CC,
 \eeq
 where
 $$
 (\alpha,\beta)_N:=\int_M\langle\alpha,\beta\rangle.
 $$

Denote
$$
\cD^k_M=\Oplus_{i+j-n=k}D^{i,j}_M \ \ \ \ \ \
\mbox{with}\quad D^{i,j}_M:= \sD^{(n,i)}(\wedge^jV\otimes F^{\vee}).
$$
The coboundary map $\delta_s$ of $\cD^{\bullet}_M$  is defined as follows
 \beq
 \label{express}\delta_s \alpha=\dbar\alpha+(-1)^{l+1}s\wedge\alpha,\quad\quad\quad\quad \text{for}\quad\quad \alpha\in\sD^{p,q}(\wedge^l V\otimes F^{\vee}). \eeq

By (\ref{kob}) and (\ref{us1}),
 \beq(\alpha,\dbar_s\beta)_N+(-1)^{\sharp\alpha}(\delta_s\alpha,\beta)_N=0.
 \eeq
 Thus $(\cD^{\bullet}_M, (-1)^{\bullet+1}\delta_s)$ is the dual complex of $(\cC_{c,M}^{\bullet},\dbar_s)$.
 Let
 \beq\label{def-co}
 \mathcal{H}^{k}(M;V,F^{\vee}):=\frac{\kernel (\delta_s:\cD^{k}_M\to \cD^{k+1}_M)}{\image(\delta_s:\cD^{k-1}_M\to\cD^{k}_M)}.
 \eeq

Because the complex $(\cD^{\bullet}_M, (-1)^{\bullet+1}\delta_s)$ is quasi-isomorphic to the complex $(\cD^{\bullet}_M, \delta_s)$, the pairing (\ref{pair9}) induces a pairing on the hypercohomologies which we denote by
\beq\label{pair1}
 (-,-)_N:\mathcal{H}^{\bullet}(M;V,F^{\vee})\times \mathbb{H}_c^{\bullet}(M;V,F)\to \CC.
 \eeq
\begin{theo}\label{main2}Let $V,~F$ be holomorphic bundles over the complex manifold $M$ with  $\mathrm{rank}\, V=\dim M=n$ and $s$ be a holomorphic section of $V$ with compact zero loci $Z=s^{-1}(0)$. Then the above pairing (\ref{pair1}) is non-degenerate. Thus for $-n\le k\le n$
$$
\mathbb{H}_c^{k}(M;V,F)\cong\mathcal{H}^{-k}(M;V,F^{\vee})^{\vee}.
$$
\end{theo}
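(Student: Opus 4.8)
The plan is to deduce the theorem from the functional-analytic mechanism underlying Serre's duality theorem, exploiting that $(\cD^{\bullet}_M,(-1)^{\bullet+1}\delta_s)$ is by construction the topological dual complex of $(\cC^{\bullet}_{c,M},\dbar_s)$. At the cochain level the pairing (\ref{pair9}) is already perfect: the space of currents $\sD^{n,n-i}(\wedge^{j}V\otimes F^{\vee})$ is by definition the strong dual of the compactly supported test forms $\Omega^{0,i}_c(\wedge^{j}V^{\vee}\otimes F)$, so $(-,-)_N$ identifies each $\cD^{-k}_M$ with the dual of $\cC^{k}_{c,M}$. The identity $(\alpha,\dbar_s\beta)_N+(-1)^{\sharp\alpha}(\delta_s\alpha,\beta)_N=0$ recorded above shows that, up to the sign twist, $\delta_s$ is the transpose of $\dbar_s$ under this identification. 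Hence everything reduces to the general principle: for a cochain complex of suitable topological vector spaces whose differentials have closed range, the cohomology of the dual complex is the dual of the cohomology.

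The key analytic input is therefore that $\dbar_s$ has closed range on $(\cC^{\bullet}_{c,M},\dbar_s)$. I would obtain this from finiteness: since (\ref{KZ1}) is exact off the compact set $Z$, the cohomology $\mathbb{H}^{\bullet}(M;V,F)$ is finite dimensional, and by Proposition~\ref{quasi-isom} so is $\mathbb{H}^{\bullet}_c(M;V,F)$. The $\cC^{k}_{c,M}$ are $LF$-spaces, $\dbar_s$ is continuous, and standard results in the theory of topological vector spaces (going back to L.~Schwartz and used by Serre) guarantee that a continuous differential with finite-dimensional cohomology has closed range. I would isolate this as a preliminary lemma, checking that the hypotheses genuinely apply to the $LF$/current topologies in play.

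Granting closed range, the two directions of non-degeneracy follow by Hahn--Banach. If $\beta\in\cC^{k}_{c,M}$ is $\dbar_s$-closed and $(\alpha,\beta)_N=0$ for every $\delta_s$-cocycle $\alpha\in\cD^{-k}_M$, then, since a current annihilating $\image(\dbar_s)$ is precisely a $\delta_s$-cocycle by the transpose relation and the perfectness of the cochain pairing, Hahn--Banach forces $\beta\in\overline{\image(\dbar_s)}=\image(\dbar_s)$, so $[\beta]=0$ in $\mathbb{H}^{k}_c(M;V,F)$. Conversely, if $\alpha\in\cD^{-k}_M$ is $\delta_s$-closed and pairs to zero with every $\dbar_s$-cocycle, then $\alpha$ annihilates $\kernel(\dbar_s)$; closedness of the range gives $\kernel(\dbar_s)^{\perp}=\overline{\image(\delta_s)}=\image(\delta_s)$, whence $[\alpha]=0$ in $\mathcal{H}^{-k}(M;V,F^{\vee})$. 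As both cohomologies are finite dimensional, the resulting non-degenerate pairing yields $\mathbb{H}^{k}_c(M;V,F)\cong\mathcal{H}^{-k}(M;V,F^{\vee})^{\vee}$.

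The main obstacle is precisely the closed-range statement. Perfectness of the cochain pairing and adjointness of $\dbar_s,\delta_s$ are formal, but upgrading a non-degenerate pairing of cochains to one of cohomologies genuinely requires that $\image(\dbar_s)$ be closed (equivalently, that the cohomology be Hausdorff in the quotient topology); without it one controls only the closure of the image and Hahn--Banach fails to conclude. The care needed is to verify Schwartz's hypotheses in the present $LF$-and-current setting and to legitimize passing to biduals; this is harmless here precisely because the finite-dimensionality furnished by Proposition~\ref{quasi-isom} makes all the relevant cohomology spaces reflexive.
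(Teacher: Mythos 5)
Your argument is correct and is essentially the paper's: the paper's proof consists of exactly the two observations you start from --- that $(\cD^{\bullet}_M,(-1)^{\bullet+1}\delta_s)$ is the topological dual complex of $(\cC^{\bullet}_{c,M},\dbar_s)$ and that $\mathbb{H}^{k}_c(M;V,F)$ is finite dimensional --- followed by a citation of \cite[Theorem 1.6 and Corollary 1.7]{LL}, which are precisely the abstract closed-range/Hahn--Banach duality statements you sketch. The only difference is that you re-derive the content of the cited results (Schwartz-type closed range from finite-dimensional cohomology, then Hahn--Banach in both directions) rather than quoting them; the LF/current hypotheses you flag as needing verification are exactly what the appeal to \cite{LL} is meant to handle.
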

\begin{proof}
Because $\mathbb{H}_c^{k}(M;V,F)$ is finite dimensional and the complex $(\cD^{\bullet}_M, (-1)^{\bullet+1}\delta_s)$ is the dual complex of $(\cC_{c,M}^{\bullet},\dbar_s)$, by applying \cite[Theorem 1.6]{LL} and \cite[Corollary 1.7]{LL}, we obtain the theorem.
\end{proof}

\begin{rema}
By the Dolbeault-Grothendieck Lemma \cite[3.29]{Dem} for current, $\mathcal{H}^{\bullet}(M;V,F^{\vee})$ (up to a shift on degree) is the hypercohomology of the following complex,
 \beq\label{KZ5}0\to\det \Omega_M\otimes F^{\vee}\mapright{-s} \det \Omega_M\otimes V\otimes F^{\vee} \mapright{(-1)^2s\wedge}\cdots \mapright{(-1)^{n}s\wedge} \det \Omega_M\otimes \wedge^nV\otimes F^{\vee}\to0,\eeq
 which is quasi-isomorphic to the complex
\beq\label{KZ9}0\to\det \Omega_M\otimes F^{\vee}\mapright{s} \det \Omega_M\otimes V\otimes F^{\vee} \mapright{s\wedge}\cdots \mapright{s\wedge} \det \Omega_M\otimes \wedge^nV\otimes F^{\vee}\to0.\eeq
\end{rema}

Let $(\overline{\cD}^{\bullet}_M,\widetilde{\delta}_s)$ be the complex with
$$
\overline{\cD}^k_M=\Oplus_{i-j=k}\overline{D}^{i,j}_M \ \ \ \ \ \
\mbox{where}\quad \overline{D}^{i,j}_M:= \sD^{(0,i)}(\wedge^jV^{\vee}\otimes F^{\vee}),
$$
and the coboundary map $\widetilde{\delta}_s$ is defined as follows
\beq
\widetilde{\delta}_s\alpha=\dbar\alpha+(-1)^{n-l+1}\iota_s \alpha,\quad\quad\quad\quad \text{for}\quad\quad \alpha\in\sD^{p,q}(\wedge^lV^{\vee}\otimes F^{\vee}).
\eeq

By the Dolbeault-Grothendieck Lemma \cite[3.29]{Dem} for current, the complex $(\overline{\cD}^{\bullet}_M,\widetilde{\delta}_s)$ is the Dolbeault resolution of the following complex

\beq\label{KZ6}0\to \wedge^nV^{\vee}\otimes F^{\vee}\mapright{-\iota_s}\cdots \mapright{(-1)^n\iota_s} V^{\vee}\otimes F^{\vee}\mapright{(-1)^{n}\iota_s}  F^{\vee}\to0.\eeq

 Denote by
 \beq
 \Ho^{k}(\overline{\cD}^{\bullet}_M):=\frac{\kernel (\widetilde{\delta}_s:\overline{\cD}^{k}_M\to \overline{\cD}^{k+1}_M)}{\image(\widetilde{\delta}_s:\overline{\cD}^{k-1}_M\to\overline{\cD}^{k}_M)}
 \eeq
the cohomology of the complex $(\overline{\cD}^{\bullet}_M,\widetilde{\delta}_s)$.

Let $\psi\in\Gamma(M,\det V\otimes \det \Omega_M)$ be a holomorphic section which is nowhere vanishing. It induces a bundle isomorphism
$$
\psi\lrcorner:\wedge^l V^{\vee}\otimes F^{\vee}\to \det \Omega_M\otimes \wedge^{n-l} V\otimes F^{\vee}.
$$
 Thus
\beq
\sD^{0,q}(\wedge^l V^{\vee}\otimes F^{\vee})\cong \sD^{n,q}(\wedge^{n-l} V\otimes F^{\vee}).
\eeq

By Lemma \ref{sign}, Lemma \ref{sign1} and (\ref{express}), the  map
$$\psi\lrcorner:\overline{\cD}^{\bullet}_M\to \cD^{\bullet}_M$$
is a complex isomorphism. Therefore for $-n\le k\le n$
\beq
 \Ho^{k}(\overline{\cD}^{\bullet}_M)\cong \mathcal{H}^{k}(M;V,F^{\vee}).
\eeq


The complex (\ref{KZ6}) is quasi-isomorphic to
\beq\label{KZ7}0\to \wedge^nV^{\vee}\otimes F^{\vee}\mapright{\iota_s}\cdots \mapright{\iota_s} V^{\vee}\otimes F^{\vee}\mapright{\iota_s}  F^{\vee}\to0, \eeq and we denote this quasi-isomorphism by $\Phi$. It induces the following hypercohomology isomorphism \beq H^{\bullet}(\Phi):\mathbb{H}^{\bullet}(M;V,F^{\vee})\cong\Ho^{\bullet}(\overline{\cD}^{\bullet}_M).\eeq
 Thus we have
\beq\label{iso-quasi}
 \mathbb{H}^{\bullet}(M;V,F^{\vee})\cong \mathcal{H}^{\bullet}(M;V,F^{\vee}).
\eeq

Combining (\ref{pair1}) and (\ref{iso-quasi}), there is a pairing
\beq\label{pair2}
(-,-)'_{\psi}:\mathbb{H}^{\bullet}(M;V,F^{\vee})\times \mathbb{H}_c^{\bullet}(M;V,F)\to \CC,
\eeq
where $(\alpha,\beta)'_{\psi}:=(\psi\lrcorner(H^{\bullet}(\Phi)(\alpha)),\beta)_N$.
%

From (\ref{iso-quasi}) and Theorem \ref{main2}, we obtain the following statement.
\begin{coro}\label{coro1}Let $V,~F$ be holomorphic bundles over the complex manifold $M$ with  $\mathrm{rank}\, V=\dim M=n$ and $s$ be a holomorphic section of $V$ with compact zero loci $Z=s^{-1}(0)$.  Assume that $\psi\in\Gamma(M,\det V\otimes \det \Omega_M)$ is a holomorphic section which is nowhere vanishing. Then the pairing $(-,-)'_{\psi}$ is non-degenerate. Thus for $-n\le k\le n$
\beq
\mathbb{H}_c^{k}(M;V,F)\cong\mathbb{H}^{-k}(M;V,F^{\vee})^{\vee}.
\eeq
\end{coro}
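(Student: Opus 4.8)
The plan is to deduce the statement formally from Theorem \ref{main2}, transporting the non-degenerate pairing $(-,-)_N$ of (\ref{pair1}) through the explicit cohomology isomorphism (\ref{iso-quasi}). Essentially all the analytic content has already been spent: the non-degeneracy of $(-,-)_N$ comes from Theorem \ref{main2} (via \cite{LL}), and the identification of the various complexes has been carried out in the passage leading up to (\ref{iso-quasi}). What remains is to observe that the pairing $(-,-)'_\psi$ of (\ref{pair2}) is, by its very definition, the pullback of $(-,-)_N$ along an isomorphism.

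First I would unwind the definition (\ref{pair2}): for $\alpha\in\mathbb{H}^{\bullet}(M;V,F^{\vee})$ and $\beta\in\mathbb{H}_c^{\bullet}(M;V,F)$ one has $(\alpha,\beta)'_{\psi}=(\psi\lrcorner(H^{\bullet}(\Phi)(\alpha)),\beta)_N$. The key observation is that the assignment $\alpha\mapsto\psi\lrcorner(H^{\bullet}(\Phi)(\alpha))$ is precisely the isomorphism underlying (\ref{iso-quasi}): it is the composition of the quasi-isomorphism-induced map $H^{\bullet}(\Phi)\colon\mathbb{H}^{\bullet}(M;V,F^{\vee})\xrightarrow{\sim}\Ho^{\bullet}(\overline{\cD}^{\bullet}_M)$ with the map on cohomology induced by the complex isomorphism $\psi\lrcorner\colon\overline{\cD}^{\bullet}_M\xrightarrow{\sim}\cD^{\bullet}_M$, the latter yielding $\Ho^{\bullet}(\overline{\cD}^{\bullet}_M)\xrightarrow{\sim}\mathcal{H}^{\bullet}(M;V,F^{\vee})$. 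Here the hypothesis that $\psi$ is nowhere vanishing is exactly what makes $\psi\lrcorner$ a bundle isomorphism in each degree, and hence a complex isomorphism by Lemma \ref{sign}, Lemma \ref{sign1} and (\ref{express}).

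Next, since $(-,-)'_{\psi}$ is obtained from the non-degenerate pairing $(-,-)_N$ by precomposing in the first slot with an isomorphism, it is itself non-degenerate. Concretely, in fixed degree the composite restricts to an isomorphism $\mathbb{H}^{-k}(M;V,F^{\vee})\xrightarrow{\sim}\mathcal{H}^{-k}(M;V,F^{\vee})$, while pairing against $\mathbb{H}_c^{k}(M;V,F)$ via $(-,-)_N$ is non-degenerate by Theorem \ref{main2}. The claimed duality $\mathbb{H}_c^{k}(M;V,F)\cong\mathbb{H}^{-k}(M;V,F^{\vee})^{\vee}$ then follows by composing the isomorphism $\mathbb{H}_c^{k}(M;V,F)\cong\mathcal{H}^{-k}(M;V,F^{\vee})^{\vee}$ of Theorem \ref{main2} with the dual of (\ref{iso-quasi}).

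The proof is formal, so there is no serious obstacle at this stage. The one point that requires care is the degree bookkeeping: one must verify that the grading conventions on the three complexes $\cC_{c,M}^{\bullet}$, $\cD^{\bullet}_M$, $\overline{\cD}^{\bullet}_M$ together with their sign-twisted differentials $\dbar_s$, $(-1)^{\bullet+1}\delta_s$ and $\widetilde{\delta}_s$ are mutually consistent, so that the isomorphism of (\ref{iso-quasi}) is genuinely degree-preserving and the pairing places $\mathbb{H}^{-k}(M;V,F^{\vee})$ opposite $\mathbb{H}_c^{k}(M;V,F)$ exactly as stated.
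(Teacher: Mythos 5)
Your proposal is correct and follows essentially the same route as the paper: the paper's proof is precisely the one-line observation that Corollary \ref{coro1} follows from Theorem \ref{main2} together with the isomorphism (\ref{iso-quasi}), since $(-,-)'_{\psi}$ is by definition the pullback of the non-degenerate pairing $(-,-)_N$ along the isomorphism $\alpha\mapsto\psi\lrcorner(H^{\bullet}(\Phi)(\alpha))$. Your unwinding of that definition and the degree bookkeeping remark are consistent with the paper's construction, so nothing further is needed.
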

We define the following pairing
\beq\label{pair3}
(-,-)_{\psi}:\mathbb{H}^{\bullet}(M;V,F^{\vee})\times \mathbb{H}^{\bullet}(M;V,F)\to \CC,
\eeq
such that $(\alpha,\beta)_{\psi}:=([\alpha],[\beta])'_{\psi}$, where $[\alpha]$ and $[\beta]$ are the image of $\alpha, \beta$ under the isomorphic $\mathbb{H}^{\bullet}(M;V,F)\cong \mathbb{H}_c^{\bullet}(M;V,F)$ and $ \mathbb{H}^{\bullet}(M;V,F^{\vee})\cong \mathbb{H}^{\bullet}_c(M;V,F^{\vee})$. The pairing (\ref{pair3}) is well defined because it does not dependent on the compact support representation. Therefore
\begin{coro}\label{useful}Under the conditions as in Corollary \ref{coro1}, the pairing $(-,-)_{\psi}$ is non-degenerate. Thus for $-n\le k\le n$
\beq
\mathbb{H}^{k}(M;V,F)\cong\mathbb{H}^{-k}(M;V,F^{\vee})^{\vee}.
\eeq
\end{coro}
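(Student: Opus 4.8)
The plan is to deduce Corollary \ref{useful} as a purely formal consequence of Corollary \ref{coro1} and Proposition \ref{quasi-isom}, since all of the analytic input has already been spent on those two results. Corollary \ref{coro1} furnishes the non-degenerate pairing
$$(-,-)'_{\psi}:\mathbb{H}^{\bullet}(M;V,F^{\vee})\times \mathbb{H}_c^{\bullet}(M;V,F)\to \CC,$$
and Proposition \ref{quasi-isom} supplies, for each of the bundles $F$ and $F^{\vee}$, a canonical isomorphism $\mathbb{H}^{k}\cong\mathbb{H}_c^{k}$ between ordinary and compactly supported hypercohomology, induced on cohomology by the chain-level quasi-isomorphism of Lemma \ref{lemmaforquasiiso}. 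By the very definition (\ref{pair3}), the pairing $(-,-)_{\psi}$ is the composite of $(-,-)'_{\psi}$ with these isomorphisms applied to the arguments; so it suffices to observe that composing a perfect pairing with linear isomorphisms leaves it perfect.

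First I would isolate the elementary fact that if $B:W_1\times W_2\to\CC$ is a non-degenerate pairing of finite-dimensional complex vector spaces and $\phi_i:W_i'\to W_i$ are isomorphisms, then $B\circ(\phi_1\times\phi_2)$ is again non-degenerate: its adjoint factors as $\phi_2^{\vee}$ composed with the adjoint of $B$ composed with $\phi_1$, a composite of isomorphisms. Applying this with $B=(-,-)'_{\psi}$ and the $\phi_i$ the isomorphisms of Proposition \ref{quasi-isom} immediately gives non-degeneracy of $(-,-)_{\psi}$. Finiteness of all the spaces involved (recorded after (\ref{KZ1})) then upgrades non-degeneracy to the stated isomorphism $\mathbb{H}^{k}(M;V,F)\cong\mathbb{H}^{-k}(M;V,F^{\vee})^{\vee}$ for $-n\le k\le n$, the index $-k$ matching the degree shift already present in (\ref{pair1}) and tracked through (\ref{iso-quasi}); equivalently, this isomorphism is the chain of identifications $\mathbb{H}^{k}(M;V,F)\cong\mathbb{H}_c^{k}(M;V,F)\cong\mathbb{H}^{-k}(M;V,F^{\vee})^{\vee}$ coming from Proposition \ref{quasi-isom} followed by Corollary \ref{coro1}.

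The hard part, such as it is, will be the well-definedness claimed immediately after (\ref{pair3}): that $(\alpha,\beta)_{\psi}$ does not depend on the compactly supported representatives selected under the identifications $\mathbb{H}^{\bullet}\cong\mathbb{H}_c^{\bullet}$. I expect this to be essentially automatic, because every object in sight already lives at the level of cohomology — the pairing $(-,-)'_{\psi}$ descends from the current pairing $(-,-)_N$, and the maps of Proposition \ref{quasi-isom} are isomorphisms of cohomology groups rather than mere chain maps. Concretely, two representatives of the same class differ by a $\dbar_s$- or $\delta_s$-exact term, and such terms pair to zero by the adjunction $(\alpha,\dbar_s\beta)_N+(-1)^{\sharp\alpha}(\delta_s\alpha,\beta)_N=0$ established before Theorem \ref{main2}; the explicit homotopy $R_\rho$ of Lemma \ref{lemmaforquasiiso} is what makes the passage between the two complexes respect these classes. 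Once this independence is recorded, no analytic obstacle remains and the corollary follows formally.
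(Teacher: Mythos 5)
Your proposal is correct and matches the paper's (largely implicit) argument: the paper likewise obtains Corollary \ref{useful} by composing the non-degenerate pairing $(-,-)'_{\psi}$ of Corollary \ref{coro1} with the isomorphisms $\mathbb{H}^{\bullet}\cong\mathbb{H}_c^{\bullet}$ of Proposition \ref{quasi-isom}, the only point it records being the independence of the compactly supported representative, which you also justify. Your write-up is in fact more explicit than the paper's ``Therefore,'' but it is the same proof.
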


Let $V=\Omega_{M}$ be the holomorphic cotangent bundle of a small open ball $M=\{z\in\CC^n||z|<\epsilon\}$, and $F=\cO_M$. Let $s=df$ be a holomorphic section of $V$, where $f$ is a holomorphic function on $M$. Let $\psi=dz_1\wedge\cdots\wedge dz_n\otimes e_1\wedge\cdots\wedge e_n$, where $\{z_i\}$ is the coordinate of $\CC^n$ and $\{e_i\}$ is the holomorphic frame of $\Omega_{M}$. Assume that $s=df=f_1e_1+\cdots+f_ne_n$ and $Z=s^{-1}(0)=0$, then using Corollary \ref{useful} and the proof of \cite[Theorem 1.3]{LML}
$$
\mathbb{H}^{0}(M;V,F)\cong\Gamma(M,\cO_{M})/(f_1,\cdots,f_n);\quad \quad \mathbb{H}^{k}(M;V,F)=0, \quad k\neq 0.
$$
On the other hand $s$ induced the following complex
\beq\label{KZ4}0\to\det \Omega_M\mapright{-s} \det \Omega_M\otimes V \mapright{(-1)^2s\wedge}\cdots \mapright{(-1)^n s\wedge} \det \Omega_M\otimes \wedge^nV\to0.\eeq
%

 As in the definition of (\ref{def-co}), let $\mathcal{H}^{\bullet}(M;V,\cO_M)$ be the hypercohomology of (\ref{KZ4}), and $\mathcal{H}_c^{\bullet}(M;V,\cO_M)$ be its hypercohomology with compact support. By \cite[Proposition 3.2]{ML1}
\beq\label{equa1}
\mathcal{H}^{k}(M;V,\cO_M)\cong \mathcal{H}_c^{k}(M;V,\cO_M),
\eeq
for $-n\le k\le n$.

Because $\psi=dz_1\wedge\cdots\wedge dz_n\otimes e_1\wedge\cdots\wedge e_n$  is nowhere vanishing. It induces a bundle isomorphism
$$
\psi\lrcorner:\wedge^l V^{\vee}\to \det \Omega_M\otimes \wedge^{n-l} V.
$$
So is the Dolbeault resolution of the two complex (\ref{KZ}) and (\ref{KZ4}).

For $g,~h\in\Gamma(M,\cO_{M})$, let $[g],~[h]$ be the images of $g,~h$ under the isomorphism $\mathbb{H}^{0}(M;V,F)\cong \mathbb{H}_c^{0}(M;V,F)$. By (\ref{equa1}), (\ref{pair2}) and (\ref{pair3}) the pairing
\begin{eqnarray*}
(g,h)_{\psi}&=&\int_M\langle(-1)^{\lfloor \frac{n+3}{2}\rfloor}g\psi,[h]\rangle\\
&=&(-1)^{\lfloor \frac{n+3}{2}\rfloor}\int_M \langle gh\psi,[1]\rangle\\
&=&(-1)^{\lfloor \frac{n+3}{2}\rfloor+\frac{n(n+1)}{2}}\int_M (gh\psi)\lrcorner[1].
\end{eqnarray*}
 Let $\rho$ be a smooth cut-off function on $M$ such that $\rho|_{U_1}\equiv 1$ and $\rho|_{M\setminus U_2}\equiv 0$
 for some  relatively compact open neighborhoods $U_1\subset \overline{U}_1 \subset U_2$ of $0$ in $M$. By Lemma \ref{lemmaforquasiiso}, we have
\beq
[\dbar_s, R_\rho]=1-T_\rho.\nonumber
\eeq
Therefore
\beq
\int_M (gh\psi)\lrcorner[1]=\int_M (gh\psi)\lrcorner[T_\rho 1].
\eeq
For the nonzero $s$  on $U:=M\setminus Z$, the smooth section $\bar{s}:=\frac{(*,s)_{h_V}}{(s,s)_{h_V}}\in\Gamma(U,\cA^{0,0}( V^{\vee}))$ induces a  contraction    $$\iota_{\bar{s}}:\Gamma(U,\cA^{0,i}(\wedge^j V))\rightarrow\Gamma(U,\cA^{0,i}(\wedge^{j-1} V)).$$

Denote
$$
\widetilde{\cC}^k_M=\Oplus_{i+j=k}\widetilde{C}^{i,j}_M \ \ \ \ \ \
\mbox{with}\quad \widetilde{C}^{i,j}_M:= \Omega^{(0,i)}(\wedge^j V)=\Gamma(M,\cA^{0,i}(\wedge^j V)),
$$
 and
$$
\widetilde{\cC}^k_{c,M}=\Oplus_{i+j=k}\widetilde{C}^{i,j}_{c,M} \ \ \ \ \ \
\mbox{with}\quad \widetilde{C}^{i,j}_{c,M}:= \{\alpha\in \widetilde{C}^{i,j}_{M}| \ \alpha\   \text{has compact support}\}.
$$
Let $\widetilde{\cC}_M:=\oplus_k\widetilde{\cC}^k_M$ and $\widetilde{\cC}_{c,M}:=\oplus_k\widetilde{\cC}^k_{c,M}$. Let $j:U\to M$ be the injection. We can form the following operator by using the contraction $\iota_{\bar{s}}$
\beq
 \widetilde{T}_\rho: \widetilde{\cC}_M\to \widetilde{\cC}_{c,M}  \qquad   \qquad \widetilde{T}_\rho(\alpha):=\rho \alpha+(\dbar\rho)\iota_{\bar{s}} {1\over 1+[\dbar, \iota_{\bar{s}}]}(j^*\alpha). \eeq

Because $\psi$ is holomorphic, by Lemma \ref{sign}
\beq
\int_M (gh\psi)\lrcorner[T_\rho 1]=\int_M \widetilde{T}_\rho(gh\psi).
\eeq

Denote $\psi^{\prime}=gh\psi$, and applying \cite[Proposition 3.3]{ML1} to $\psi^{\prime}$ and $s=df$, we have
\begin{eqnarray}\label{formula3}
(g,h)_{\psi}&=&(-1)^{\lfloor \frac{n+3}{2}\rfloor+\frac{n(n+1)}{2}}\int_M (gh\psi)\lrcorner[T_\rho 1]\\
&=&(-1)^{\lfloor \frac{n+3}{2}\rfloor+\frac{n(n+1)}{2}}\int_M \widetilde{T}_\rho(gh\psi)\nonumber\\
&=&(-1)^{\lfloor \frac{n+3}{2}\rfloor+\frac{n(n+1)}{2}}(-2\pi i)^n\Res\frac{\psi^{\prime}}{s},\nonumber
\end{eqnarray}
where $\Res\frac{\psi^{\prime}}{s}$ is the virtual residue associated to $\psi'$ and $s$, it coincides with the Grothendieck residue $\mathrm{res}_s(g,h)=\int_{|f_i|=\epsilon_i}\frac{gh dz_1\wedge\cdots\wedge dz_n}{f_1\cdots f_n}$ up to a sign, see formula (\ref{equa9}).
Thus we recover the local duality theorem, see \cite[Page 659]{GH}.
\begin{coro}Let $V=\Omega_{M}$ be the holomorphic cotangent bundle of a small open ball $M=\{z\in\CC^n||z|<\epsilon\}$, and $F=\cO_M$. Let $s=df$ be a holomorphic section of $V$, where $f$ is a holomorphic function on $M$. Let $\psi=dz_1\wedge\cdots\wedge dz_n\otimes e_1\wedge\cdots\wedge e_n$, where $\{z_i\}$ is the coordinate of $\CC^n$ and $\{e_i\}$ is the holomorphic frame of $\Omega_{M}$. Assume that  $s=df=f_1e_1+\cdots+f_ne_n$  and $Z=s^{-1}(0)=0$. Then
$$\mathrm{res}_s:\Gamma(M,\cO_{M})/(f_1,\cdots,f_n)\times \Gamma(M,\cO_{M})/(f_1,\cdots,f_n)\to \CC$$ is non-degenerate.
\end{coro}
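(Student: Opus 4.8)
The plan is to combine the abstract non-degeneracy statement of Corollary \ref{useful} with the explicit residue computation recorded in (\ref{formula3}), so that the Grothendieck residue pairing $\mathrm{res}_s$ is identified, up to a nonzero scalar, with the already-established non-degenerate pairing $(-,-)_{\psi}$. Since non-degeneracy is insensitive to multiplication by a nonzero constant, the conclusion then follows formally.

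First I would record that in the present situation $F=\cO_M$, hence $F^{\vee}=\cO_M$, so both factors of the pairing $(-,-)_{\psi}$ in (\ref{pair3}) are the single group $\mathbb{H}^{0}(M;V,\cO_M)$. By Corollary \ref{useful}, applied with $\psi=dz_1\wedge\cdots\wedge dz_n\otimes e_1\wedge\cdots\wedge e_n$, which is holomorphic and nowhere vanishing, this pairing is non-degenerate. Next, using the proof of \cite[Theorem 1.3]{LML} together with the hypothesis $Z=s^{-1}(0)=\{0\}$, I would invoke the identifications $\mathbb{H}^{0}(M;V,\cO_M)\cong\Gamma(M,\cO_M)/(f_1,\ldots,f_n)$ and $\mathbb{H}^{k}(M;V,\cO_M)=0$ for $k\neq 0$. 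Under these, $(-,-)_{\psi}$ becomes a non-degenerate bilinear pairing on the Jacobian ring $\Gamma(M,\cO_M)/(f_1,\ldots,f_n)$.

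The main point is then the explicit evaluation in (\ref{formula3}): for $g,h\in\Gamma(M,\cO_M)$ one has
$$(g,h)_{\psi}=(-1)^{\lfloor\frac{n+3}{2}\rfloor+\frac{n(n+1)}{2}}(-2\pi i)^n\,\Res\frac{\psi'}{s},\qquad \psi'=gh\psi,$$
and by \cite{ML1} the virtual residue $\Res\frac{\psi'}{s}$ agrees, up to a sign, with the Grothendieck residue $\mathrm{res}_s(g,h)$. Hence $(g,h)_{\psi}$ equals $\mathrm{res}_s(g,h)$ times a fixed nonzero constant. Because scaling by a nonzero constant preserves non-degeneracy of a pairing on a finite-dimensional space, the non-degeneracy of $(-,-)_{\psi}$ transfers directly to $\mathrm{res}_s$, which is exactly the assertion.

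The only genuinely delicate step is the chain of identifications leading to (\ref{formula3}): one must verify that the compactly supported representative produced by $T_\rho$, equivalently $\widetilde{T}_\rho$ after applying $\psi\lrcorner$, computes precisely the virtual residue of \cite{ML1}, and that replacing $[1]$ by $[T_\rho 1]$ via the homotopy $R_\rho$ of Lemma \ref{lemmaforquasiiso} leaves the integral unchanged. This is where \cite[Proposition 3.2]{ML1} and \cite[Proposition 3.3]{ML1} enter, and where the holomorphy of $\psi$ is used through Lemma \ref{sign}. Tracking the signs $\lfloor\frac{n+3}{2}\rfloor$ and $\frac{n(n+1)}{2}$ through the isomorphisms $\psi\lrcorner$ and $H^{\bullet}(\Phi)$ is the part requiring the most care; everything else is a formal consequence of the results already established.
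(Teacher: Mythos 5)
Your proposal is correct and follows essentially the same route as the paper: invoke Corollary \ref{useful} for non-degeneracy of $(-,-)_{\psi}$, identify $\mathbb{H}^{0}(M;V,\cO_M)$ with the Jacobian ring $\Gamma(M,\cO_M)/(f_1,\ldots,f_n)$ via \cite[Theorem 1.3]{LML}, and then use the explicit evaluation (\ref{formula3}) together with the comparison (\ref{equa9}) between the virtual residue and the Grothendieck residue to conclude that $\mathrm{res}_s$ differs from $(-,-)_{\psi}$ only by a fixed nonzero constant. The "delicate step" you flag (the homotopy $T_\rho$, Lemma \ref{sign}, and \cite[Propositions 3.2, 3.3]{ML1}) is exactly the computation the paper carries out in the passage preceding the corollary and in its Appendix.
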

\section{Appendix}
In this appendix we recall the construction of the virtual residue given by Chang and the author in \cite{ML1}, and prove the relation between the virtual residue and the Grothendieck residue when the zero loci is zero-dimensional.

Let $V$ be a holomorphic bundle over a compact complex manifold $M$ with $\rank V=\dim M=n$. Let $s$ be a  holomorphic section of  $V$, and  $Z=s^{-1}(0)$ be the compact zero loci.

Let $U:=M\setminus Z$, and  let $V_U$  be the restriction of $V$  over $U$.  Since $s$ is nowhere zero over $U$, the following Koszul sequence is exact over $U$
$$
    0\lra K_U\mapright{s} K_U\otimes V_U \mapright{ s\wedge} \cdots \mapright{ s\wedge}  K_U\otimes\wedge ^{n-1}V_U \mapright{ s\wedge} K_U\otimes\wedge^n V_U\lra 0.
$$
The exact Koszul sequence induces a homomorphism
  \beq\label{ONE}
  \coh^0(U,K_U\otimes\wedge^n V_U)\lra \coh^{n-1}(U,K_U).
  \eeq
  One also has a canonical Dolbeault isomorphism
 \beq\label{TWO}
 \coh^{n-1}(U,K_U) \cong \coh_{\bar{\partial}}^{n,n-1}(U).
 \eeq
Applying  \eqref{ONE} and \eqref{TWO} to the holomorphic section $\psi\in \Gamma(M,K_M\otimes \det{V})$, and using that every $(n,n-1)$ form is $\partial$-closed, one obtains a (unique) De-Rham cohomology class
   \beq\label{eta}
   \eta_\psi\in \coh^{2n-1}(U,\CC).
  \eeq
Then the virtual residue is defined as
\beq\label{V-residue}\mathrm{Res}_{Z}\frac{\psi}{s}:=\bigg(\frac{1}{2\pi \sqrt{-1}}\bigg)^n\int_{N}\eta_\psi\in\CC,\eeq
where   $N$ is a real $(2n-1)$-dimensional piecewise smooth compact subset of $M$ that surrounds $Z$, in the sense that  $N=\partial T$ for some compact domain $T\sub M$, which contains $Z$ and is homotopically equivalent to $Z$.

When $M=\{z\in\CC^n||z|<\epsilon\}$ is a small open ball, and $V=\Omega_{M}$ with the standard Hermitian metric $h_V$. Let $F=\cO_M$ and $s=df$, where $f$ is a holomorphic function on $M$. Let  $\{z_i\}$ be the coordinate of $\CC^n$, and $\{e_i\}$ be the holomorphic frame of $\Omega_{M}$. Assume that $s=df=f_1e_1+\cdots+f_ne_n$ and $Z=s^{-1}(0)=0$.  Let $\bar{s}=\langle s,s\rangle_{h_V}^{-1}\sum_{i=1}^n\bar{f}_i e_i^*$, where $e_i^*$ is the dual basis of $V^{\vee}$. Then we have the following equalities on $U$
$$
\dbar\iota_{\bar{s}}=\sum\big(\frac{\dbar \bar{f}_i}{\langle s,s\rangle_{h_V}}-\frac{\bar{f}_i\dbar\langle s,s\rangle_{h_V}}{\langle s,s\rangle_{h_V}^2}\big)\iota_{e_i^*},
$$
and
$$
(\langle s,s\rangle_{h_V}^{-1}\sum_{i=1}^n\bar{f}_i \iota_{e_i^*})(\sum\frac{\bar{f}_i\dbar\langle s,s\rangle_{h_V}}{\langle s,s\rangle_{h_V}^2}\iota_{e_i^*})=-\frac{\dbar\langle s,s\rangle_{h_V}}{\langle s,s\rangle_{h_V}}(\langle s,s\rangle_{h_V}^{-1}\sum_{i=1}^n \bar{f}_i \iota_{e_i^*})^2=0.
$$
Let $g,~h$ be holomorphic functions on $M$. Then $\psi=ghdz_1\wedge\cdots\wedge dz_n\otimes\  e_1\wedge\cdots\wedge e_n$ is a holomorphic section of $\Gamma(M,K_M\otimes\det V)$. Therefore
\begin{eqnarray*}
\eta_\psi&=&\langle s,s\rangle_{h_V}^{-1}(\sum \bar{f}_i\iota_{e_i^*})(\dbar\iota_{\bar{s}})^{n-1}\psi\\
&=&\langle s,s\rangle_{h_V}^{-1}(\sum \bar{f}_i\iota_{e_i^*})\big(\sum\frac{\dbar \bar{f}_i}{\langle s,s\rangle_{h_V}}\iota_{e_i^*}\big)^{n-1}\psi\\
&&-(n-1)\langle s,s\rangle_{h_V}^{-1}(\sum \bar{f}_i\iota_{e_i^*})\big(\sum\frac{\dbar \bar{f}_i}{\langle s,s\rangle_{h_V}}\iota_{e_i^*}\big)^{n-2}\big(\sum\frac{\bar{f}_i\dbar\langle s,s\rangle_{h_V}}{\langle s,s\rangle_{h_V}^2}\big)\iota_{e_i^*}\psi\\
&=&(\langle s,s\rangle_{h_V}^{-1}\sum \bar{f}_i\iota_{e_i^*})\big(\sum\frac{\dbar \bar{f}_i}{\langle s,s\rangle_{h_V}}\iota_{e_i^*}\big)^{n-1}\psi\\
&=&(-1)^{\frac{n(n-1)}{2}+\frac{n(n+1)}{2}}(n-1)!gh\sum_{i=1}^n (-1)^{i-1}\\
&&\frac{\bar{f}_i}{\langle s,s\rangle_{h_V}^n} \overline{\partial }\bar{f}_1\wedge\cdots\wedge\widehat{\overline{\partial}\bar{f}_i}\wedge\cdots\wedge\overline{\partial}\bar{f}_n \wedge dz_1\wedge\cdots\wedge dz_n.
\end{eqnarray*}
Let $N$ be a small sphere around $0$, the virtual residue

\begin{eqnarray}\mathrm{Res}_{Z}\frac{\psi}{s}&=&\bigg(\frac{1}{2\pi \sqrt{-1}}\bigg)^n\int_{N}\eta_\psi\\
&=&(-1)^{\frac{n(n+1)}{2}+\frac{n(n-1)}{2}}(n-1)!\bigg(\frac{1}{2\pi \sqrt{-1}}\bigg)^n\int_{N}gh\sum_{i=1}^n (-1)^{i-1}\nonumber\\
&&\frac{\bar{f}_i}{\langle s,s\rangle_{h_V}^n} \overline{\partial }\bar{f}_1\wedge\cdots\wedge\widehat{\overline{\partial}\bar{f}_i}\wedge\cdots\wedge\overline{\partial}\bar{f}_n \wedge dz_1\wedge\cdots\wedge dz_n.\nonumber
\end{eqnarray}
By Lemma in \cite[Page 651]{GH} and the definition of $\mathrm{res}_s(g,h)$ in \cite[Page 659]{GH}, we have
\beq\label{equa9}
\mathrm{Res}_{Z}\frac{\psi}{s}=(-1)^{\frac{n(n+1)}{2}}\mathrm{res}_s(g,h).
\eeq
\bibliographystyle{amsplain}

\end{document}